\renewcommand{\@seccntformat}[1]{{\csname the#1\endcsname}{\normalsize .}\hspace{.5em}}
\def \[{\begin{equation}}
	\def \]{\end{equation}}
\newtheorem{thm}{Theorem}[section]
\newtheorem{defi}{Definition}
\newtheorem{claim}{Claim}
\newtheorem{lemma}[thm]{Lemma} 
\newtheorem{cor}[thm]{Corollary}
\newtheorem{fact}{Fact}
\newenvironment{wst}
{\setlength{\leftmargini}{1.5\parindent}
	\begin{itemize}
		\setlength{\itemsep}{-1.1mm}}
	{\end{itemize}}
\begin{document}
	\baselineskip=0.23in
	\title{\bf  
        Extending two results on hamiltonian graphs involving the bipartite-hole-number\thanks{
			{\it Email addresses}: chengkunmath@163.com (K. Cheng), tyr2290@163.com (Y. Tang).}}
	\author{Kun Cheng}
	\author{Yurui Tang\thanks{Corresponding author}}
	\affil{Department of Mathematics, East China Normal University, Shanghai, 200241, China}
	\date{\today}
	\maketitle
	\begin{abstract}
		The bipartite-hole-number of a graph $G$, denoted by $\widetilde{\alpha}(G)$, is the minimum number $k$ such that there exist positive integers $s$ and $t$ with $s+t=k+1$ with the property that for any two disjoint sets $A,B\subseteq V(G)$ with $|A|=s$ and $|B|=t$, there is an edge between $A$ and $B$. In this paper, we first prove that any $2$-connected graph $G$ satisfying  $d_G(x)+d_G(y)\ge 2\widetilde{\alpha}(G)-2$ for every  pair of non-adjacent vertices $x,y$  is hamiltonian except for a special family of graphs, thereby extending results of Li and Liu (2025), and Ellingham, Huang and  Wei (2025). 
		We then establish a stability version of a theorem by McDiarmid and Yolov (2017): every graph whose minimum degree is at least its bipartite-hole-number minus one is hamiltonian except for a special family of graphs. 
		\vskip 0.2cm
		\noindent {\bf Keywords:}  
		Hamiltonian; Stability; Bipartite hole\vspace{0.2cm}
		
		\noindent {\bf AMS Subject Classification:} 05C07; 05C38; 05C45
	\end{abstract} 
	
	\section{\normalsize Introduction}
	We consider finite simple graphs and use standard terminology and notation from \cite{bondy1} and \cite{West}.    
	Let $G$ be a graph with vertex set $V(G)$ and edge set $E(G)$. Then $n(G):=|V(G)|$ and $e(G):=|E(G)|$ are called the {\it order} and {\it size} of $G$, respectively.  
	For $v\in V(G)$, let $N_G(v)$ and $d_G(v)$ be the neighborhood and the degree of $v$ in $G$, respectively. 
	Let $\delta(G)=\min\{d_G(v): v\in V(G)\}$ be the minimum degree of $G$. 
	We write $P_n$, $C_n$ and $K_n$ for the path, the cycle and the complete graph of order $n$, respectively.  
	For two graphs $G$ and $H,$ $G\vee H$ denotes the {\it join} of $G$ and $H,$ which is obtained from the disjoint union $G+H$ by adding edges joining every vertex of $G$ to every vertex of $H.$  
	For graphs we will use equality up to isomorphism, so $G = H$ means that $G$ and $H$ are isomorphic.

	For two distinct vertices $x$ and $y$ in $G$, an {\em $(x, y)$-path}  is a path whose endpoints are $x$ and $y$.  
	A Hamilton cycle (resp., path) in $G$ is a cycle (resp., path) containing every vertex of $G$.  
	$G$ is {\em hamiltonian} (resp., {\em traceable}) if it contains a Hamilton cycle (resp., path).  
	A graph is called  {\em  hamiltonian-connected} if between any 
	two distinct vertices there is a Hamilton path. 
	The classic  Dirac theorem~\cite{Dirac1952} from 1952 states that 
	if  $G$ is a graph of order $n\ge 3$ and $\delta(G)\geq n/2$, then $G$ is hamiltonian. 
    Let $$\sigma_2(G):=\min\{d_G(x)+d_G(y):x,y\in V(G) \text{~~and~~} xy\not\in E(G)\}.$$ 
	Ore~\cite{Ore1960}  weakened Dirac's condition and showed that a graph $G$ of order $n$ is hamiltonian if $\sigma_2(G)\ge n.$  
    The length of a shortest  $(x,y)$-path in a graph $G$ is called the {\em distance} between $x$ and $y$, and denoted $d_G(x,y).$ 
    In 1984, Fan~\cite{Fan} introduced a condition for pairs of vertices at distance $2$, 
	and proved that 
	if $G$ is a $2$-connected graph of order $n$ such that $\max\{d_G(x),d_G(y)\}\ge n / 2$ for every pair of vertices $x,y$ with $d_G(x,y)=2$, 
	then $G$ is hamiltonian. 
	A well-known result of Chv\'{a}tal and Erd\H{o}s~\cite{Chvatal1972} from 1972 states that a graph with connectivity not less than its independence number is hamiltonian.
	For more  results in this field, 
	we refer the reader to two nice surveys \cite{Gould2014,Li2013}.
	
    For a graph $G$ and two disjoint vertex sets $X,Y\subseteq V(G)$, we use $E(X,Y)$ to denote the set of edges with one end in $X$ and the other in $Y$. Let $e(X,Y):=|E(X,Y)|.$  
	In 2017, McDiarmid and Yolov~\cite{Mcdiarmid2017} introduced the concept of bipartite hole in the study the  Hamilton cycles.
	\begin{defi}
		An $(s,t)$-bipartite-hole in a graph $G$ consists of two disjoint sets of vertices, $S$ and $T$, with $|S| = s$ and $|T| = t$, such that $E(S, T)= \emptyset$. 
		The {\it bipartite-hole-number} of a graph $G$, denoted by $\widetilde{\alpha}(G)$, is the minimum number $k$ such that there exist positive integers $s$ and $t$ with $s + t = k + 1$, and such that $G$ does not contain an $(s,t)$-bipartite-hole.
	\end{defi}
	
	An equivalent definition of $\widetilde{\alpha}(G)$ is the maximum integer $r$ such that $G$ contains an $(s,t)$-bipartite-hole for every pair of nonnegative integers $s$ and $t$ with $s+t=r$. 
	McDiarmid and Yolov \cite{Mcdiarmid2017} provided a sufficient condition for hamiltonicity  in terms of the minimum degree and the bipartite-hole-number of a graph.
	\begin{thm}[McDiarmid-Yolov \cite{Mcdiarmid2017}]\label{thmMc}
		Let $G$ be a graph of order at least three.  If $\delta(G)\ge \widetilde{\alpha}(G)$, then $G$ is hamiltonian.
	\end{thm}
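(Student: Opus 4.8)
The plan is to argue by contradiction and to exploit the ``maximum'' form of the definition: writing $\alpha:=\widetilde{\alpha}(G)$, it suffices to show that a non-hamiltonian $G$ with $\delta(G)\ge\alpha$ would contain an $(s,t)$-bipartite-hole for \emph{every} pair of positive integers with $s+t=\alpha+1$, since this forces $\widetilde{\alpha}(G)\ge\alpha+1$, contradicting $\widetilde{\alpha}(G)=\alpha$. Note throughout that any such split obeys $s,t\le\alpha\le\delta(G)$. First I would dispose of low connectivity: if $G$ is disconnected, or has a cut-vertex $v$, then $G$ (respectively $G-v$) has two ``sides'' each containing at least $\delta(G)\ge\alpha$ vertices with no edges between them, so choosing $s$ vertices in one side and $t$ in the other produces the required $(s,t)$-hole for every split. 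Hence I may assume $G$ is $2$-connected.

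Assume now that $G$ is $2$-connected and non-hamiltonian, let $C$ be a longest cycle, cyclically oriented, with $c:=|V(C)|<n$, pick a component $D$ of $G-V(C)$, put $A:=N_{V(C)}(D)$ and $m:=|A|$, and let $A^{+}$ be the set of successors on $C$ of the vertices of $A$. The standard longest-cycle rerouting arguments provide the facts I will lean on: (i) $E(A^{+},D)=\emptyset$ and $A^{+}$ is independent, so for any $x\in D$ the set $A^{+}\cup\{x\}$ is independent, whence $\alpha(G)\ge m+1$; since $\alpha(G)\le\widetilde{\alpha}(G)=\alpha$ (an independent set of size $\alpha(G)$ already yields holes for all splits of that size) this gives $m\le\alpha-1$. (ii) Every vertex of $C\setminus A$ is non-adjacent to $D$, because $N_G(D)\cap V(C)=A$. (iii) Taking $x\in D$ with $N_G(x)\subseteq(D\setminus\{x\})\cup A$ and $d_G(x)\ge\delta(G)\ge\alpha$ yields $|D|+m\ge\alpha+1$. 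Finally I would invoke the classical bound that a $2$-connected graph has a cycle of length at least $\min\{n,2\delta(G)\}$; as $c<n$ this gives $c\ge2\delta(G)\ge2\alpha$, so $|C\setminus A|=c-m\ge2\alpha-(\alpha-1)=\alpha+1$.

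With these in hand I would produce, for an arbitrary split $s+t=\alpha+1$ with $M:=\max\{s,t\}$ and $\mu:=\min\{s,t\}$, an $(s,t)$-hole by one of two constructions according to the size of $D$. If $|D|\le M$, I use the pair $A^{+}\cup D$, whose only edges lie inside $D$: placing all of $D$ on the larger side and filling both sides from the independent set $A^{+}$ gives a valid hole, the number of $A^{+}$-vertices needed being $(\alpha+1)-|D|\le m$ by (iii). If instead $|D|>M$, I use the pair $(D,\,C\setminus A)$, which has no crossing edges by (ii): I place $M$ vertices in $D$ (possible since $|D|>M$) and $\mu$ vertices in $C\setminus A$ (possible since $c-m\ge\alpha+1>\mu$). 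These two cases cover all possibilities, so every split of $\alpha+1$ admits a bipartite hole, yielding $\widetilde{\alpha}(G)\ge\alpha+1$ and completing the proof.

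The conceptual core, and the step I would be most careful about, is fact (i): that the successor set $A^{+}$ is independent and avoids $D$. This is precisely where non-extendability of the longest cycle is converted into non-adjacency, via the usual detour-through-$D$ rerouting that would otherwise produce a strictly longer cycle. The other delicate point is that the two constructions jointly cover \emph{every} split, which hinges on the quantitative inputs $m\le\alpha-1$, $|D|+m\ge\alpha+1$, and the circumference bound $c\ge2\alpha$; I would verify that these inequalities interlock exactly as needed, in particular that the regimes $|D|\le M$ and $|D|>M$ leave no gap. The reduction to the $2$-connected case and the ``every split'' reformulation are routine by comparison.
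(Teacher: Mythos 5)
Your proof is correct. A preliminary remark: the paper does not prove Theorem~\ref{thmMc} at all --- it is quoted as a known result of McDiarmid and Yolov --- so there is no in-paper proof to compare against, and your argument stands as an independent, self-contained derivation. Your route is the longest-cycle/Chv\'{a}tal--Erd\H{o}s one: with $C$ a longest cycle, $D$ a component of $G-V(C)$, $A=N_{V(C)}(D)$ and $m=|A|$, the standard rerouting facts (that $A^{+}$ is independent and sends no edges to $D$) are indeed valid, and your three quantitative inputs interlock as claimed: $m\le \widetilde{\alpha}(G)-1$ because $A^{+}\cup\{x\}$ is independent and the independence number is at most the bipartite-hole-number; $|D|+m\ge \delta(G)+1\ge \widetilde{\alpha}(G)+1$ from any vertex of $D$; and $|V(C)\setminus A|=c-m\ge 2\delta(G)-m\ge \widetilde{\alpha}(G)+1$ from Dirac's circumference bound (which applies since $c<n$). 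Consequently, for any split $s+t=\widetilde{\alpha}(G)+1$, your two constructions do cover all cases: when $|D|\le\max\{s,t\}$ the set $A^{+}\cup D$ supplies both sides of a hole (the $A^{+}$-budget check $(\widetilde{\alpha}(G)+1)-|D|\le m$ is exactly your fact (iii)), and when $|D|>\max\{s,t\}$ the pair $(D,\,V(C)\setminus A)$ works outright; the reduction of the disconnected and cut-vertex cases is also sound since every ``side'' there has at least $\delta(G)\ge\widetilde{\alpha}(G)\ge\max\{s,t\}$ vertices. This yields $\widetilde{\alpha}(G)\ge\widetilde{\alpha}(G)+1$, the desired contradiction. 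By way of comparison, the original McDiarmid--Yolov argument, like the techniques this paper uses for its own Theorems~\ref{thmctOre} and~\ref{thmct1}, works with longest or Hamilton paths, extracting bipartite holes from the shifted neighborhoods $(N_P(v_1))^{-}$ and $(N_P(v_n))^{+}$ of the two endpoints; your cycle-based approach trades that machinery for two classical black boxes (the longest-cycle rerouting facts and Dirac's bound $c\ge\min\{n,2\delta\}$), which makes the hole constructions very transparent, whereas the path-based approach generalizes more readily to the degree-sum refinements that are the subject of this paper.
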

	A graph $G$ with $\delta(G)\ge n/2$ contains no $(1, \lfloor n/2\rfloor)$-bipartite-hole, hence 
	$\delta(G) \ge n/2 \ge \widetilde{\alpha}(G),$ 
	so Theorem~\ref{thmMc} generalizes the  Dirac theorem.  
	
	In 2024, Zhou, Broersma, Wang and Lu~\cite{Zhou2024} raised the minimum degree bound in Theorem~\ref{thmMc} and showed that a graph $G$ is hamiltonian-connected if $\delta(G)\ge \widetilde{\alpha}(G)+1$. 
	A graph of order $n$ is {\em pancyclic} if it contains one cycle of each length 
	$l$, $3\le l\le n.$ 
	  Dragani\'{c},  Correia and Sudakov~\cite{Draganic2024}  showed   that a graph $G$ is pancyclic if $\delta(G)\ge \widetilde{\alpha}(G)$,  unless $n$ is even and $G$ is the balanced complete bipartite graph $K_{\frac{n}{2}, \frac{n}{2}}$. This  result generalizes both  Theorem~\ref{thmMc} and the classical Bondy pancyclicity theorem~\cite{bondy71}. 
      Li and Liu~\cite{Li-Liu} proved that a 2-connected graph $G$ is hamiltonian if $\sigma_2(G)\ge 2\widetilde{\alpha}(G).$ 
    Recently, Ellingham, Huang and  Wei~\cite[Theorem 1.4]{Ellingham} weakened this condition, showing that $\sigma_2(G)\ge 2\widetilde{\alpha}(G)-1$ is already sufficient. For more recent works on the bipartite-hole-number, we refer the readers to \cite{Chen2022,Han2024,LLT,LiuHc}.  
    \begin{thm}[Ellingham-Huang-Wei~\cite{Ellingham}]\label{thmEl} 
    Let $G$ be a $2$-connected graph of order at least three.  If $\sigma_2(G)\ge 2\widetilde{\alpha}(G)-1$, then $G$ is hamiltonian.
    \end{thm}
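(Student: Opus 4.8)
The plan is to argue by contradiction, combining the minimum-degree theorem of McDiarmid and Yolov (Theorem~\ref{thmMc}) with a longest-cycle analysis. Set $k := \widetilde{\alpha}(G)$ and suppose, for contradiction, that $G$ is a $2$-connected non-hamiltonian graph of order at least three with $\sigma_2(G) \ge 2k - 1$. First I would dispose of the easy regime: if $\delta(G) \ge k$, then Theorem~\ref{thmMc} already produces a Hamilton cycle, so I may assume $\delta(G) \le k - 1$. A useful preliminary observation is that the set $A := \{v \in V(G) : d_G(v) \le k - 1\}$ of low-degree vertices induces a clique, since any two non-adjacent $x,y \in A$ would give $d_G(x)+d_G(y) \le 2k-2 < 2k - 1 \le \sigma_2(G)$; hence every vertex outside $A$ has degree at least $k$.

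The engine of the argument is the standard fact that non-hamiltonicity, exposed through a longest cycle, forces a large independent set, together with the elementary inequality $\alpha(G) \le \widetilde{\alpha}(G)$ relating the independence number to the bipartite-hole-number (an independent set of size $m$ yields an $(s,t)$-bipartite-hole for every split with $s+t \le m$, and so certifies $\widetilde{\alpha}(G) \ge m$). Concretely, let $C$ be a longest cycle of $G$; since $G$ is not hamiltonian, $R := V(G)\setminus V(C) \ne \emptyset$. Fixing a component $H$ of $G - V(C)$ and using $2$-connectivity together with the maximality of $C$, I would select an off-cycle vertex $x$ so that the successors along $C$ of the neighbours of $x$ on $C$, together with the path-successors of the neighbours of $x$ inside $H$, form a set $Y$ with $|Y| = d_G(x)$ for which $\{x\}\cup Y$ is independent. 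This yields $\alpha(G) \ge d_G(x)+1$, whence $d_G(x) \le k-1$ and in particular $x \in A$.

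At this point the degree-sum hypothesis enters. Every $y \in Y$ is non-adjacent to $x$, so $d_G(y) \ge \sigma_2(G) - d_G(x) \ge 2k - 1 - d_G(x) \ge k$; thus all of the successors are high-degree vertices lying outside the clique $A$. The aim is now to enlarge $\{x\}\cup Y$ to an independent set of size $k+1$ --- equivalently, to exhibit two disjoint vertex sets with no edge between them, each of size at least $k$ --- which would force $\widetilde{\alpha}(G) \ge k+1$ and contradict $\alpha(G) \le \widetilde{\alpha}(G) = k$. To achieve this I would pivot the longest-cycle construction at a successor $y \in Y$ (or at a second off-cycle vertex), generate a second family of successors, and use the lower bounds $d_G(y) \ge 2k - 1 - d_G(x)$ to guarantee that the two families can be fused, after discarding the few overlaps and crossing edges, into an independent set or a balanced empty bipartite pair of the required size.

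The main obstacle is precisely this fusion step: bounding the overlaps and the crossing edges between the two successor families while keeping both sides of size at least $k$. This is where the exact constant is decisive and where the improvement from $\sigma_2(G)\ge 2k$ (Li and Liu) to $\sigma_2(G)\ge 2k-1$ must be earned; I expect the crux to be a careful double count of the non-edges supplied by $\sigma_2(G) \ge 2k-1$ against the edges forbidden by the maximality of $C$. A secondary but necessary technicality, which I would settle first to keep the main line clean, is the rotation and insertion bookkeeping that makes the independent set $\{x\}\cup Y$ available when the off-cycle component $H$ is not a single vertex, so that ``successor'' is well defined both along $C$ and along the attached path in $H$.
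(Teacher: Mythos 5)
This statement is not proved in the paper at all: it is quoted from Ellingham--Huang--Wei~\cite{Ellingham}, and the paper only uses it as a black box (indeed, the paper's own Theorem~\ref{thmctOre} is a strengthening of it, proved by a Hamilton-path analysis). So your outline has to stand on its own, and it does not: the step you yourself call the ``fusion step'' is the entire content of the theorem, and you never carry it out. Everything before it is routine and correct --- the reduction to $\delta(G)\le \widetilde{\alpha}(G)-1$ via Theorem~\ref{thmMc}, the fact that the low-degree vertices form a clique, and the inequality $\alpha(G)\le\widetilde{\alpha}(G)$ --- but the plan to ``pivot'' a second successor family and fuse the two ``after discarding the few overlaps and crossing edges,'' which you say you ``expect'' to follow from ``a careful double count,'' is precisely where all the work lies. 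In this line of results (Li--Liu~\cite{Li-Liu} at $\sigma_2\ge 2\widetilde{\alpha}$, the present theorem at $2\widetilde{\alpha}-1$, and this paper's Theorem~\ref{thmctOre} at $2\widetilde{\alpha}-2$), controlling exactly those overlaps and adjacencies takes several pages of case analysis (compare Claims~\ref{c0}--\ref{1} of this paper), and the contradiction is typically \emph{not} a large independent set: it is an $(s,t)$-bipartite-hole for the one specific pair $(s,t)$ with $s+t=\widetilde{\alpha}(G)+1$ that witnesses the definition, interleaved with degree contradictions and explicitly constructed Hamilton cycles. Your target, $\alpha(G)\ge \widetilde{\alpha}(G)+1$ (or two mutually non-adjacent sets each of size $\widetilde{\alpha}(G)$), is strictly stronger than what is needed to violate the definition, and nothing in the outline shows it is attainable.

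There is also a concrete error in the step you dismiss as a ``secondary technicality.'' If the component $H$ of $G-V(C)$ is not a single vertex, the set $Y$ you describe need not make $\{x\}\cup Y$ independent: take $H$ to be a clique and $x\in H$; then along any path in $H$ the successor of a neighbour of $x$ is again a neighbour of $x$, so the $H$-part of $Y$ meets $N_G(x)$, and moreover the $H$-successors can be pairwise adjacent (adjacencies among off-cycle vertices are not constrained by the maximality of $C$). Hence the claim $|Y|=d_G(x)$ with $\{x\}\cup Y$ independent fails, and with it the conclusion $d_G(x)\le \widetilde{\alpha}(G)-1$ for that vertex. This is exactly why the papers in this area do not argue on a longest cycle with successor sets, but instead first establish traceability --- here one has $\sigma_2(G\vee K_1)=\sigma_2(G)+2\ge 2\widetilde{\alpha}(G)+1=2\widetilde{\alpha}(G\vee K_1)+1$, so the theorem of Li and Liu~\cite{Li-Liu} applied to the $3$-connected graph $G\vee K_1$ shows $G$ is traceable --- and then run the whole argument on a Hamilton path, where every vertex has a well-defined successor and predecessor. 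Your outline would need both this repair and, far more seriously, an actual proof of the fusion step before it could be considered a proof.
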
    
    
	In this paper, 
	we first extend the results of Li and Liu~\cite{Li-Liu}  and  Ellingham, Huang and  Wei~\cite{Ellingham} as follows.
	\begin{thm}\label{thmctOre}
		Let $G$ be a $2$-connected graph of order $n\ge 3$.  
		If $\sigma_2(G)\ge 2\widetilde{\alpha}(G)-2$, 
		then $G$ is hamiltonian unless 
		$G= G_{ \frac{n-1}{2}}\vee \frac{n+1}{2} K_1$, where $n\ge 5$ is odd and  $G_{ \frac{n-1}{2}}$ is an arbitrary graph of order $(n-1)/2$. 
	\end{thm}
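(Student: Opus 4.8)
The plan is to argue by contradiction and to use Theorem~\ref{thmEl} as a black box. Suppose $G$ is $2$-connected of order $n\ge 3$ with $\sigma_2(G)\ge 2\widetilde{\alpha}(G)-2$, but $G$ is non-hamiltonian and not of the stated exceptional form, and write $t:=\widetilde{\alpha}(G)$. If $\sigma_2(G)\ge 2t-1$, then Theorem~\ref{thmEl} forces $G$ to be hamiltonian, a contradiction; hence we may assume the extremal identity $\sigma_2(G)=2t-2$. Thus the entire argument is a stability analysis of the Ellingham--Huang--Wei bound: the goal is to show that this single unit of slack below their threshold pins $G$ down to $G_{\frac{n-1}{2}}\vee\frac{n+1}{2}K_1$.

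Next I would set up the standard longest-cycle machinery. Let $C$ be a longest cycle of $G$, oriented cyclically; since $G$ is non-hamiltonian, $R:=V(G)\setminus V(C)\ne\emptyset$. Fix a component $H$ of $G-V(C)$ and a vertex $u\in H$. Because $G$ is $2$-connected, $H$ has at least two attachments $a_1,\dots,a_k$ on $C$ (the vertices of $C$ with a neighbour in $H$), with $k\ge 2$; let $a_i^+$ denote the successor of $a_i$ along $C$. The usual crossing/rotation arguments for a longest cycle give that the attachments are pairwise non-consecutive, that no $a_i^+$ has a neighbour in $H$, and that $S_0:=\{u,a_1^+,\dots,a_k^+\}$ is independent; consequently $\widetilde{\alpha}(G)\ge \alpha(G)\ge k+1$ and $c:=|V(C)|\ge 2k$. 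The key quantitative input is a matching upper bound: taking the non-adjacent pair $u$ and $w:=a_1^+$, one has $N_G(u)\subseteq(V(H)\setminus\{u\})\cup\{a_1,\dots,a_k\}$ and $N_G(w)\subseteq V(G)\setminus(\{w\}\cup\{a_2^+,\dots,a_k^+\}\cup V(H))$, whence $d_G(u)+d_G(w)\le n-1$. Together with the hypothesis this yields $2t-2=\sigma_2(G)\le d_G(u)+d_G(w)\le n-1$, so that $\widetilde{\alpha}(G)=t\le (n+1)/2$; the classical circumference bound $c\ge\min\{n,\sigma_2(G)\}$ (see~\cite{bondy1}) then sandwiches $2t-2\le c\le n-1$.

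The heart of the matter---and the step I expect to be the main obstacle---is to upgrade every inequality above to an equality, i.e.\ to prove the reverse bound $\widetilde{\alpha}(G)\ge (n+1)/2$, equivalently $\sigma_2(G)=n-1$ and circumference $c=n-1$. Here the soft bounds are insufficient: the independent set $S_0$ only yields $\widetilde{\alpha}(G)\ge k+1$, and a single $(1,\cdot)$-bipartite-hole does not control all splits, so I expect to have to re-run the Ellingham--Huang--Wei stability analysis. Concretely, assuming $\sigma_2(G)\le n-2$ one must produce either a Hamilton cycle or, for \emph{every} admissible split, a bipartite hole of total size exceeding $t$, while simultaneously excluding long gaps between consecutive attachments and the presence of more than one vertex off $C$; Theorem~\ref{thmMc}, which already forces $\delta(G)\le \widetilde{\alpha}(G)-1$, should feed this analysis. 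I expect the extremal analysis to force, in order, that $R=H$ consists of a single vertex $u$, that every gap of $C$ has exactly one interior vertex (so $c=2k$ and $V(C)=\{a_1,\dots,a_k\}\cup\{a_1^+,\dots,a_k^+\}$), and hence that $n=2k+1$ is odd with $k=(n-1)/2$ and $t=(n+1)/2$.

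Once the skeleton is fixed, the join structure follows cleanly. Set $A:=\{a_1,\dots,a_k\}$ and $I:=\{u,a_1^+,\dots,a_k^+\}$; then $|A|=(n-1)/2$, the set $I$ is independent of size $(n+1)/2$, and it remains only to show that every vertex of $I$ is adjacent to every vertex of $A$. This is where the tight value $\sigma_2(G)=n-1$ does the final work: since $I$ is independent we have $N_G(x)\subseteq A$ for each $x\in I$, so if some $x\in I$ missed a neighbour in $A$ then $d_G(x)\le \frac{n-1}{2}-1$, and pairing $x$ with any other (necessarily non-adjacent) vertex $x'\in I$ would give $d_G(x)+d_G(x')\le \frac{n-3}{2}+\frac{n-1}{2}=n-2<\sigma_2(G)$, a contradiction; as $|I|=(n+1)/2\ge 3$ for $n\ge 5$, such an $x'$ exists. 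Hence $A$ is completely joined to $I$, there is no constraint among the vertices of $A$, and $G=G[A]\vee\frac{n+1}{2}K_1=G_{\frac{n-1}{2}}\vee\frac{n+1}{2}K_1$, as required. Conversely, one checks directly that every graph of this form is $2$-connected, is non-hamiltonian (deleting $A$ leaves $(n+1)/2>(n-1)/2$ components), and satisfies $\sigma_2=n-1=2\widetilde{\alpha}-2$, confirming that the exceptional family is genuine.
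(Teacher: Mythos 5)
Your proposal has a genuine gap at its center, and you name it yourself: the step where ``every inequality is upgraded to an equality'' --- forcing the part off the longest cycle to be a single vertex, forcing every gap of $C$ to contain exactly one interior vertex, and forcing $\widetilde{\alpha}(G)=(n+1)/2$ --- is never carried out; it is only announced as what you ``expect the extremal analysis to force.'' This is not a minor omission: it is essentially the entire content of the theorem. The soft bounds you do establish ($\widetilde{\alpha}(G)\ge \alpha(G)\ge k+1$, $d_G(u)+d_G(a_1^+)\le n-1$, hence $\widetilde{\alpha}(G)\le (n+1)/2$, and the circumference bound) are correct but nowhere near sufficient. The intrinsic difficulty with the bipartite-hole parameter is that lower-bounding $\widetilde{\alpha}(G)$ requires exhibiting an $(s,t)$-bipartite-hole for \emph{every} split $s+t=r$, while your independent set $S_0$ gives only one-sided information; conversely, exploiting the hypothesis requires explicitly constructing, for the specific admissible pair $(s,t)$ with $s+t=\widetilde{\alpha}(G)+1$ and no $(s,t)$-hole, two large vertex sets with no edges between them, and nothing in your sketch explains how to manufacture such sets from the cycle structure, nor how to exclude several components off $C$ or long gaps between attachments. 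Your endgame (deducing the join structure once the skeleton $A\cup I$ with $I$ independent of size $(n+1)/2$ is in hand) is fine, but it consumes the conclusion of the missing analysis as a hypothesis.

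For comparison, the paper's proof is organized quite differently and is fully worked out. It applies Theorem~\ref{thmEl} not to $G$ but to $G\vee K_1$ (whose $\sigma_2$ increases by $2$ while $\widetilde{\alpha}$ is unchanged), concluding that $G$ is traceable; it then fixes a Hamilton path $P=v_1\cdots v_n$ chosen so that $d_G(v_1)\le d_G(v_n)$ with these degrees maximized, fixes a pair $(s,t)$, $s+t=\widetilde{\alpha}(G)+1$, for which $G$ has no $(s,t)$-bipartite-hole, and proves a chain of claims: the endpoint degrees equal $\widetilde{\alpha}(G)-1$; the shifted neighbour sets $A_2^+$ and $B_1^-$ are independent; and consecutive neighbours of $v_1$ and of $v_n$ lie at distance exactly $2$ along $P$. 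In each claim the forbidden $(s,t)$-hole is built explicitly from shifted segments of $P$, and potential crossings are killed by rotation arguments together with the extremal choice of $P$. That machinery --- working from both ends of a Hamilton path rather than around a longest cycle --- is what actually pins down $G=G_{\frac{n-1}{2}}\vee\frac{n+1}{2}K_1$, and it has no counterpart in your proposal.
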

	
	By applying Theorem~\ref{thmctOre}, we obtain a stability version of Theorem~\ref{thmMc}.
	\begin{thm}\label{thmct1}
		Let $G$ be a graph of order $n\ge 3$.  
		If $\delta(G)\ge \widetilde{\alpha}(G)-1$, 
		then $G$ is hamiltonian unless 
		$G\in\{ G_{ \frac{n-1}{2}}\vee \frac{n+1}{2} K_1, K_1\vee 2K_{ \frac{n-1}{2}} \}$, where $n$ is odd and $G_{ \frac{n-1}{2}}$ is an arbitrary graph of order $(n-1)/2$. 
	\end{thm}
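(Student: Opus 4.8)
The plan is to derive Theorem~\ref{thmct1} from Theorem~\ref{thmctOre} by reducing the minimum-degree hypothesis to the $\sigma_2$-hypothesis. First I would observe that if $G$ has any pair of non-adjacent vertices $x,y$, then $d_G(x)+d_G(y)\ge 2\delta(G)\ge 2\widetilde{\alpha}(G)-2$, so the Ore-type bound $\sigma_2(G)\ge 2\widetilde{\alpha}(G)-2$ holds automatically. If $G$ has no non-adjacent pair, then $G=K_n$ is complete and trivially hamiltonian for $n\ge 3$; so we may assume $G$ is not complete and $\sigma_2(G)\ge 2\widetilde{\alpha}(G)-2$ is in force. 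The remaining hurdle is connectivity: Theorem~\ref{thmctOre} requires $G$ to be $2$-connected, whereas here we only have a degree bound. I would therefore first establish that the degree condition $\delta(G)\ge\widetilde{\alpha}(G)-1$ forces $G$ to be $2$-connected (or else places $G$ in a trivially handled small case), since a cut vertex or low connectivity would create a large bipartite hole that contradicts the bound relating $\delta$ and $\widetilde{\alpha}$.

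Concretely, the key connectivity step runs as follows. Suppose $G$ is not $2$-connected. If $G$ is disconnected, pick a smallest component $C$; then $V(C)$ and a single vertex outside form an obstruction, and more usefully the whole of $C$ against the rest of $V(G)$ is an $(|C|,\, n-|C|)$-bipartite-hole, which combined with $\delta(G)\ge\widetilde{\alpha}(G)-1$ yields a contradiction once $n$ is large enough. If $G$ is connected but has a cut vertex $w$, then $G-w$ splits into components; taking one component $A$ and a vertex in another component as $T$, together with the fact that no vertex of $A$ is adjacent to any vertex of the other components, produces a bipartite hole forcing $\widetilde{\alpha}(G)$ to be large relative to the sizes involved, again clashing with the degree bound. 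I would package these into a short lemma: \emph{if $\delta(G)\ge\widetilde{\alpha}(G)-1$ then either $G$ is $2$-connected or $G$ is one of a few explicitly described small graphs}.

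Once $2$-connectivity is secured, Theorem~\ref{thmctOre} applies and yields that $G$ is hamiltonian unless $G=G_{(n-1)/2}\vee\frac{n+1}{2}K_1$ with $n$ odd. The final task is to account for why the statement of Theorem~\ref{thmct1} carries the \emph{extra} exceptional family $K_1\vee 2K_{(n-1)/2}$ that does not appear in Theorem~\ref{thmctOre}. This should emerge precisely from the connectivity analysis: the graph $K_1\vee 2K_{(n-1)/2}$ has a cut vertex (the apex $K_1$), so it is not $2$-connected and hence lies outside the scope of Theorem~\ref{thmctOre}, yet it satisfies $\delta(G)\ge\widetilde{\alpha}(G)-1$ and is non-hamiltonian. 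I would verify directly that this graph meets the degree hypothesis and fails to be hamiltonian, and confirm it is the \emph{only} non-$2$-connected graph escaping the lemma above. The main obstacle I anticipate is exactly this bookkeeping at low connectivity: carefully showing that the degree bound leaves no non-$2$-connected graph other than $K_1\vee 2K_{(n-1)/2}$ (and that the $2$-connected exceptions coincide with the family from Theorem~\ref{thmctOre}), which requires a tight computation of $\widetilde{\alpha}$ for the borderline graphs and ruling out spurious cases when $n$ is small or even.
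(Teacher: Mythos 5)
Your proposal is correct and takes essentially the same approach as the paper: in the $2$-connected case apply Theorem~\ref{thmctOre} (using $\sigma_2(G)\ge 2\delta(G)\ge 2\widetilde{\alpha}(G)-2$), and otherwise classify the graphs with a cut vertex satisfying $\delta(G)\ge\widetilde{\alpha}(G)-1$ by playing bipartite-hole sizes against the degree bound, which yields exactly the extra exception $K_1\vee 2K_{\frac{n-1}{2}}$. The paper's execution differs only in minor details: it deduces connectedness from Lemma~\ref{lemZ} rather than your direct hole argument, and the ``bookkeeping'' you defer is carried out by a short chain of inequalities forcing $G-x$ to have exactly two components of order $(n-1)/2$, each complete, with the cut vertex $x$ adjacent to all other vertices.
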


	The remaining sections are organized as follows: 
	In Section~\ref{s2}, we introduce some notations. 
	In Section~\ref{s3}, we  prove Theorems~\ref{thmctOre} and \ref{thmct1}. 
	Some concluding remarks are given in the last section.

	\section{\normalsize Preliminaries}\label{s2} 
	Let $G$ be a graph.  
	For $u, v\in V(G)$, we denote by $u\sim v$ if $u$ and $v$ are adjacent, and $u\not\sim v$ otherwise.  
	A set of vertices  is {\em independent} if no two of its 
	elements are adjacent. 
	For $S\subseteq V(G)$, we write $N_S(v)$ for $N_G(v)\cap S$ and let $d_S(v):=|N_S(v)|$. 
	Similarly, for a subgraph $H$ of $G$, let $N_H(v)=N_G(v)\cap V(H)$ 
	and let $d_H(v):=|N_H(v)|$. 
	
	Let $P=v_1v_2\dots v_l$ be a path in a graph $G$.
	We set 
	\begin{align*}
		\text{
			$v_i^+:=v_{i+1}$,~~ 
			$v_j^-:=v_{j-1}$,~~  
			$v_i^{+p}:=v_{i+p}$~~and~~
			$v_j^{-q}:=v_{j-q}$,
	}\end{align*}
	respectively.  
    For any vertex subset $S\subseteq V(P)$, 
	we write 
    \begin{align*}
		S^{+p}:=\{v_i^{+p} : v_i\in S\setminus \{v_{l-p+1},\ldots,v_l\}\}\text{~~and~~}
		S^{-q}:=\{v_i^{-q} : v_i\in S\setminus \{v_1,\ldots,v_{q}\}\}.
	\end{align*}  
    Set $S^{+}=S^{+1}$ and $S^{-}=S^{-1}$ for short. 
	If $a<b,$
	we set 
	\begin{align*}
		v_a \overrightarrow{P} v_b:=v_a v_{a+1}\ldots  v_b \text{~~and~~} 
		v_b \overleftarrow{P} v_a:=v_b v_{b-1}\ldots  v_a.
	\end{align*}   
    Let $[v_a,v_b]=V(v_a \overrightarrow{P} v_b).$ 
    
	By joining a new vertex to a graph $G$, Zhou, Broersma, Wang and Lu~\cite{Zhou2024} obtained the following result from Theorem~\ref{thmMc}. 
	\begin{lemma}[Zhou-Broersma-Wang-Lu~\cite{Zhou2024}]\label{lemZ}
		A graph $G$ of order at least three is traceable if $\delta(G)\ge \widetilde{\alpha}(G)-1.$
	\end{lemma}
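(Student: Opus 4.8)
The plan is to deduce the lemma from Theorem~\ref{thmMc} by adding a single universal vertex, as the preceding sentence already hints. First I would form $G'=G\vee K_1$, obtained from $G$ by adjoining a new vertex $w$ joined to every vertex of $G$; then $n(G')=n+1\ge 4\ge 3$, so $G'$ is eligible for Theorem~\ref{thmMc}. The strategy is to verify that $G'$ satisfies the Ore-type degree/bipartite-hole inequality $\delta(G')\ge\widetilde{\alpha}(G')$, extract a Hamilton cycle of $G'$, and recover a Hamilton path of $G$ by deleting $w$.

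Next I would track how the two relevant parameters change. Each original vertex gains exactly the neighbour $w$, so its degree in $G'$ is at least $\delta(G)+1$, while $w$ itself has degree $n\ge\delta(G)+1$; hence $\delta(G')=\delta(G)+1$. The key step is to show that the bipartite-hole-number is unchanged, namely $\widetilde{\alpha}(G')=\widetilde{\alpha}(G)$. The crucial observation is that a universal vertex cannot lie in any bipartite hole with both sides nonempty: if $(S,T)$ is an $(s,t)$-bipartite-hole of $G'$ with $s,t\ge 1$ and $w\in S\cup T$, then $w$ is adjacent to every vertex of the opposite (nonempty) part, contradicting $E(S,T)=\emptyset$. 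Thus every $(s,t)$-bipartite-hole of $G'$ with $s,t\ge 1$ avoids $w$ and so is an $(s,t)$-bipartite-hole of $G$; conversely any bipartite hole of $G$ survives in $G'$, since adjoining $w$ creates no edges inside $V(G)$. Because the definition of $\widetilde{\alpha}$ quantifies only over positive $s$ and $t$, the two graphs have precisely the same relevant bipartite holes, giving $\widetilde{\alpha}(G')=\widetilde{\alpha}(G)$.

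Combining these facts, $\delta(G')=\delta(G)+1\ge\widetilde{\alpha}(G)=\widetilde{\alpha}(G')$, so Theorem~\ref{thmMc} furnishes a Hamilton cycle $C$ of $G'$. Since $w$ lies on $C$ with two neighbours $a,b\in V(G)$, deleting $w$ from $C$ leaves a Hamilton path of $G'-w=G$ with endpoints $a$ and $b$, whence $G$ is traceable.

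The argument is short, and the only place demanding genuine care is the equality $\widetilde{\alpha}(G')=\widetilde{\alpha}(G)$. In particular one should check the boundary cases $s+t=n+1$: such a hole in $G'$ would force $w\in S\cup T$ and is therefore excluded by the universality of $w$, which matches the fact that $G$ has no $(s,t)$-bipartite-hole once $s+t>n$. Confirming this ensures the bipartite-hole-numbers agree exactly, rather than up to an inequality, so that the hypothesis $\delta(G)\ge\widetilde{\alpha}(G)-1$ translates cleanly into the hypothesis of Theorem~\ref{thmMc} for $G'$.
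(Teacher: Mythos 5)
Your proof is correct and follows exactly the route the paper indicates (and that Zhou--Broersma--Wang--Lu used): join a universal vertex $w$ to form $G\vee K_1$, observe that $\delta(G\vee K_1)=\delta(G)+1$ while $\widetilde{\alpha}(G\vee K_1)=\widetilde{\alpha}(G)$ since a universal vertex can lie in no bipartite hole with both sides nonempty, apply Theorem~\ref{thmMc}, and delete $w$ from the resulting Hamilton cycle. Your careful verification of the equality $\widetilde{\alpha}(G\vee K_1)=\widetilde{\alpha}(G)$, including the boundary case $s+t=n+1$, is exactly the point that makes the reduction work.
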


	\section{\normalsize The proofs}\label{s3}
	
	\begin{proof}[\bf Proof of Theorem~\ref{thmctOre}] 
    Let $G$ be a non-hamiltonian graph satisfying the conditions of Theorem~\ref{thmctOre}.  We are to show that $G=G_{ \frac{n-1}{2}}\vee \frac{n+1}{2} K_1,$ where $n\ge 5$ and $G_{ \frac{n-1}{2}}$ is an arbitrary graph of order $(n-1)/2$.  

    By the definition of the bipartite-hole-number, 
    we have 
    $$\sigma_2(G\vee K_1)= \sigma_2(G)+2\ge 2\widetilde{\alpha}(G)=2\widetilde{\alpha}(G\vee K_1).$$ 
    Then by Theorem~\ref{thmEl}, 
    $G\vee K_1$ is hamiltonian. 
    Thus $G$ is traceable. 
		  Let  $P=v_1v_2\dots v_n$ be a Hamilton path of $G$  such that $d_G(v_1) \le d_G(v_n)$, $d_G(v_1)$ is as  large as possible, 
         and subject to this, $d_G(v_n)$ is as large as possible. 
	
		By  definition, there exist two 
		positive integers $s$ and $t$ such that $s+t=\widetilde{\alpha}(G) +1$ 
		and $G$ contains no $(s,t)$-bipartite-hole.  
		Without loss of generality, we assume that $1\le s\le t.$ 
		Since $G$ has no Hamilton cycle, $G\neq K_n$, and hence $t\ge 2$ and $\widetilde{\alpha}(G) \ge 2.$ 

 \begin{claim}\label{c0}
    $d_G(v_1)= \widetilde{\alpha}(G)-1=s+t-2.$ 
 \end{claim}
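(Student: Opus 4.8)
The plan is to sandwich $d_G(v_1)$ by controlling the \emph{larger} endpoint degree. Write $\alpha=\widetilde{\alpha}(G)$, $A=N_G(v_1)$ and $B=N_G(v_n)$. Since $G$ has a Hamilton path but no Hamilton cycle, $v_1\not\sim v_n$, so the hypothesis gives $d_G(v_1)+d_G(v_n)\ge\sigma_2(G)\ge 2\alpha-2$; together with $d_G(v_1)\le d_G(v_n)$ this already forces $d_G(v_n)\ge\alpha-1$. It therefore suffices to prove the reverse bound $d_G(v_n)\le\alpha-1$: once this holds, $d_G(v_1)\ge(2\alpha-2)-d_G(v_n)\ge\alpha-1$ while $d_G(v_1)\le d_G(v_n)\le\alpha-1$, whence $d_G(v_1)=\alpha-1=s+t-2$, as claimed.

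The basic tool is the Ore exchange along $P$: if $v_1\sim v_{j+1}$ and $v_n\sim v_j$, then $v_1v_{j+1}\overrightarrow{P}v_nv_j\overleftarrow{P}v_1$ is a Hamilton cycle, a contradiction. Hence $v_1$ has no neighbour in $B^{+}$; as $v_1\notin B^{+}$ and $|B^{+}|=d_G(v_n)$, the pair $(\{v_1\},B^{+})$ is a $(1,d_G(v_n))$-bipartite-hole (symmetrically, $(\{v_n\},A^{-})$ is a $(1,d_G(v_1))$-bipartite-hole). If $s=1$ we are done immediately: then $t=\alpha$, and were $d_G(v_n)\ge\alpha$, any $\alpha$ vertices of $B^{+}$ together with $\{v_1\}$ would form a $(1,\alpha)$-bipartite-hole, which $G$ does not contain; thus $d_G(v_n)\le\alpha-1$.

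The case $s\ge 2$ (so $\alpha=s+t-1\ge 3$) is the heart of the matter, since now the single vertex $v_1$ must be grown to an $s$-set witnessing a forbidden $(s,t)$-bipartite-hole. The mechanism is rotation of the endpoint $v_1$ with $v_n$ fixed: for a neighbour $v_i$ of $v_1$ with $i\ge 3$, the path $Q=v_{i-1}\overleftarrow{P}v_1v_i\overrightarrow{P}v_n$ is Hamiltonian with endpoints $v_{i-1}$ and $v_n$, and the Ore exchange applied to $Q$ shows that $v_{i-1}$ has no neighbour in the right tail $B^{+}\cap\{v_{i+1},\dots,v_n\}$. Choosing $s-1$ neighbours $v_{i_1},\dots,v_{i_{s-1}}$ of $v_1$ with $3\le i_1<\dots<i_{s-1}$, the vertices $S=\{v_1,v_{i_1-1},\dots,v_{i_{s-1}-1}\}$ are distinct, are disjoint from $T:=B^{+}\cap\{v_{i_{s-1}+1},\dots,v_n\}$, and satisfy $E(S,T)=\emptyset$. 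If $d_G(v_n)\ge\alpha$ and $|T|\ge t$, then $(S,T)$ is a forbidden $(s,t)$-bipartite-hole, a contradiction forcing $d_G(v_n)\le\alpha-1$.

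The main obstacle is precisely the bookkeeping needed for $|T|\ge t$. Since $|T|=d_G(v_n)-|B^{+}\cap\{v_3,\dots,v_{i_{s-1}}\}|$, one must ensure that the left part of $B^{+}$ cut off by the rotations contains at most $d_G(v_n)-t$ vertices, and, before that, that $v_1$ has enough low-index neighbours to perform $s-1$ rotations at all. This is exactly where the extremal choice of $P$ (maximising $d_G(v_1)$ and then $d_G(v_n)$) has to be exploited, in order to bound the degrees of the rotated endpoints and to preclude a clustering of $B^{+}$ near the left end; I expect this step to demand a careful interplay between the rotation structure and the degree-sum hypothesis, and it is here that the degenerate configurations underlying the exceptional graph $G_{\frac{n-1}{2}}\vee\frac{n+1}{2}K_1$ should first appear.
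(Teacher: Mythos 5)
Your reduction to the single inequality $d_G(v_n)\le\widetilde{\alpha}(G)-1$ is logically valid, your $s=1$ case is correct, and your rotation set-up for $s\ge 2$ is sound as far as it goes; indeed, if you choose the $s-1$ rotation vertices to be the \emph{first} neighbours of $v_1$ with index at least $3$, your pair $(S,T)$ is exactly the paper's configuration \eqref{e21}, with $v_{i_{s-1}}$ playing the role of $v_k$. But the step you defer --- proving $|T|\ge t$ --- is not mere bookkeeping; it is where the argument fails to close, and for a structural reason. One has $|T|\ge d_G(v_n)-|N_P(v_n)\cap[v_1,v_k)|$, so under your contradiction hypothesis $d_G(v_n)\ge\widetilde{\alpha}(G)=s+t-1$ it suffices to show $|N_P(v_n)\cap[v_1,v_k)|\le s-1$. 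The only available tool for that is a second hole configuration of type \eqref{e22}: the set $\left(N_P(v_1)\cap[v_k,v_n]\right)^+\cup\{v_1\}$ sends no edges to $\left(N_P(v_n)\cap[v_1,v_k)\right)^+$, and hole-freeness caps the latter at $s-1$ only if the former has size at least $t$. That former set has cardinality $d_G(v_1)-s+2$, which is $\ge t$ exactly when $d_G(v_1)\ge s+t-2=\widetilde{\alpha}(G)-1$. So your plan proves the bound on $d_G(v_n)$ only when $d_G(v_1)$ is already known to be at least $\widetilde{\alpha}(G)-1$ --- which is the other half of the very claim you are proving, and which you intended to \emph{deduce} from the bound on $d_G(v_n)$. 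The argument is circular precisely in the regime $d_G(v_1)\le\widetilde{\alpha}(G)-2$ (note that $\sigma_2$ then forces $d_G(v_n)\ge\widetilde{\alpha}(G)$, so this regime cannot be waved away); there your rotations may not even exist, since they require $v_1$ to have $s-1$ neighbours of index at least $3$.

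What is missing entirely is an argument disposing of the case $d_G(v_1)\le\widetilde{\alpha}(G)-2$, and this cannot be done by hole-counting with sets built from $N_G(v_1)$, because $N_G(v_1)$ is then too small to create a forbidden $(s,t)$-hole. The paper handles it by a different mechanism: by the extremal choice of $P$, every rotation endpoint $v_{h-1}$ (for $v_h\in N_G(v_1)$) satisfies $d_G(v_{h-1})\le d_G(v_1)\le\widetilde{\alpha}(G)-2$, so the condition $\sigma_2(G)\ge 2\widetilde{\alpha}(G)-2$ forces $v_{h-1}\sim v_1$; iterating shows $N_G(v_1)=\{v_2,\dots,v_h\}$ is an interval of low-degree vertices, and then $2$-connectivity supplies an edge $v_iv_j$ escaping this interval, which leads to a contradiction via one more rotation. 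Your proposal never invokes $2$-connectivity, yet it is indispensable for this half of the claim (and for the theorem). Finally, your expectation that the exceptional graphs $G_{\frac{n-1}{2}}\vee\frac{n+1}{2}K_1$ first appear as obstructions inside this claim is misplaced: those graphs satisfy $d_G(v_1)=\widetilde{\alpha}(G)-1$, so they are fully consistent with the claim and only emerge much later in the proof of Theorem~\ref{thmctOre}; they cannot be the reason this counting step is delicate.
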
       
\begin{proof}[Proof of Claim~\ref{c0}] 
To the contrary, we first suppose that $d_G(v_1)\ge \widetilde{\alpha}(G)=s+t-1\ge s+1$. Let $$k=\min\left\{i : |N(v_1)\cap [v_1,v_i]|=s\right\}.$$ 
Since $G$ is non-hamiltonian, we have 
\begin{align}\label{e21}
    E\left(  (N_P(v_1)\cap [v_1,v_k])^-,(N_P(v_n)\cap [v_k,v_n])^+ \right)=\emptyset,
\end{align}
and
\begin{align}\label{e22}
E\left( (N_P(v_1)\cap [v_k,v_n])^+, (N_P(v_n)\cap [v_1,v_k))^+ \right)=\emptyset. 
\end{align} 
Since $|N_P(v_1)\cap [v_1,v_k]|=s$, 
we have $|N_P(v_1)\cap [v_k,v_n]| \ge \widetilde{\alpha}(G)-s+1=t$. Combining  this with \eqref{e21}-\eqref{e22} and  the fact that $G$ contains no $(s,t)$-bipartite-hole, we have 
\begin{align*}
    |N_P(v_n)\cap [v_k,v_n]|\le t-1 \text{~~and~~} |N_P(v_n)\cap [v_1,v_k)|\le s-1.  
\end{align*}
Thus, $d_G(v_n)=|(N_P(v_n)\cap [v_1,v_k))\cup(N_P(v_n)\cap [v_k,v_n])|\le s+t-2=\widetilde{\alpha}(G)-1,$ which contradicts the fact that $d_G(v_1) \le d_G(v_n)$.

Now we  suppose to the contrary that  $d_G(v_1)\le \widetilde{\alpha}(G)-2=s+t-3$. Let $h:=\max\{i: v_1\sim v_i\}$. 
         Since $G$ is 2-connected and non-hamiltonian, we have 
         $3\le h\le n-1.$ 
         If $h=3$, then $N_G(v_1)=\{v_2,v_{3}\}$. 
         If $h\ge 4,$ 
         then $G$ contains a Hamilton path $v_{h-1}\overleftarrow{P}v_1v_h\overrightarrow{P}v_n$. 
         By the choice of $P$, 
         we have $d_G(v_{h-1})\le d_G(v_1)\le \widetilde{\alpha}(G)-2$. 
        Combining with $\sigma_2(G)\ge 2\widetilde{\alpha}(G)-2$, we have  $v_{h-1}\sim v_1$. 
         If $h=4$ we stop, 
         if not, 
         then we repeat the process above and thus we obtain that 
         \begin{align}\label{et0}
             \text{  
             $d_G(v_{a})\le d_G(v_1)\le \widetilde{\alpha}(G)-2$ for each $2\le a\le h-1,$~~and~~
         $N_G(v_1)=\{v_2,\ldots,v_{h}\}$. 
             }
         \end{align}
         Since $G$ is $2$-connected, there exists $v_iv_j\in E(G)$, where $2\le i\le h-1$ and $h+1\le j\le n$. This implies that $G$ contains a Hamilton path $v_{j-1}\overleftarrow{P}v_{i+1}v_1\overrightarrow{P}v_iv_j\overrightarrow{P}v_n$. Then 
         \begin{align}\label{et4}
             \text{  
         $v_{j-1}\not\sim v_n$~~and~~$d_G(v_{j-1})\le d_G(v_1)\le \widetilde{\alpha}(G)-2$. 
         }
         \end{align}
         Combining this with \eqref{et0} and $\sigma_2(G)\ge 2\widetilde{\alpha}(G)-2$, we have  $\{v_1,\ldots,v_{h-1}\}\subseteq N_G(v_{j-1})$ and hence $j=h+1$. Together with $v_{h+1}\sim v_h$, we have  $d_G(v_{h})\ge d_G(v_1)+1$, which contradicts \eqref{et4}.  
         This proves Claim~\ref{c0}.
	\end{proof}
	From Claim~\ref{c0}, we have $d_G(v_1)= s+t-2\ge s$. Then we let $k:=\min\left\{i:|N(v_1)\cap [v_1,v_i]|=s\right\}$.
		Thus $v_1\sim v_k$.  
		We write 
		\begin{align*}
			A_1:=N_P(v_n)\cap [v_1,v_k) \text{~~and~~} A_2:=N_P(v_n)\cap [v_k,v_n].  
		\end{align*}
		Since $G$ is non-hamiltonian, we have 
		\begin{align}\label{3.2}
			E\left(  (N_P(v_1)\cap [v_1,v_k])^-,A_2^+ \right)=\emptyset,
		\end{align} 
		and 
		\begin{align}\label{3.3}
			E\left( (N_P(v_1)\cap [v_k,v_n])^+\cup\{v_1\},A_1^+ \right)=\emptyset.
		\end{align} 
		Note that $|(N_P(v_1)\cap [v_1,v_k])^-|=|N_P(v_1)\cap [v_1,v_k]|=s$. 
		Combining this with the fact that $G$ contains no $(s,t)$-bipartite-hole  and \eqref{3.2}, 
		we have 
		\begin{align}\label{eq2}
			|A_2|=|A_2^+|\le t-1.  
		\end{align}
		Together with $d_G(v_n)\ge d(v_1) = s+t-2$ gives 
		\begin{align}\label{eq1}
			|A_1^+|=|A_1|\ge s-1. 
		\end{align}
		Note that  
		$|(N_P(v_1)\cap [v_k,v_n])^+\cup\{v_1\}|
		=|N_P(v_1)\cap [v_k,v_n]|+1
		= t$. 
		Combining this with \eqref{3.3} and \eqref{eq1} gives 
		\begin{align}\label{eq3}
			|A_1^+|=|A_1|= s-1.
		\end{align}
		Now by \eqref{eq2}, \eqref{eq3} and 
		$|A_1|+|A_2|=d_G(v_n)\ge d_G(v_1)=s+t-2$, we have 
		\begin{align}\label{eq4}
			\text{
				$|A_2|= t-1$~~and~~
				$d_G(v_n)=s+t-2.$ 
			}
		\end{align}
		Thus $d_G(v_n)\ge  s$.  
		We let $k':=\max\{j:|N_P(v_n)\cap [v_j,v_n]=s\}$. 
		Then $v_{k'}\sim v_n$. 
		Write 
		\begin{align*}
			B_1:=N_P(v_1)\cap [v_1,v_{k'}] \text{~~and~~} B_2:=N_P(v_1)\cap (v_{k'},v_n].
		\end{align*} 
		Since $G$ is non-hamiltonian, we have 
		\begin{align}\label{3.2'}
			E\left((N_P(v_n)\cap [v_{k'},v_n])^+,B_1^-\right)=\emptyset,
		\end{align}
		and 
		\begin{align*}
			E\left((N_P(v_n)\cap [v_1,v_{k'}])^-\cup\{v_n\},B_2^-\right)=\emptyset. 
		\end{align*} 
		Similarly, we have 
		\begin{align}\label{34}
			\text{
				$|B_2|=s-1$~~and~~$|B_1|=t-1$.
			}
		\end{align}
		
		 Let $\ell:=s+t-2$. Based on \eqref{eq3} and \eqref{eq4}, we relabel  $N_P(v_n)$ as $\{u_1,u_2,\ldots,u_\ell\}$ according to their order along  $\overrightarrow{P}$. 
		Hence 
		\begin{align*}
			A_2=N_P(v_n)\cap [v_k,v_n]=\{u_s,\ldots,u_\ell\}~~\text{and}~~u_{\ell}=v_{n-1}.  
		\end{align*}        
		Similarly, we relabel $N_P(v_1)$ as $\{w_1,w_2,\dots, w_\ell\}$ according to their order along  $\overrightarrow{P}$.  Hence 
		\begin{align*}
			B_1=N_P(v_1)\cap [v_1,v_{k'}]=\{w_1,\dots,w_{t-1}\}~~\text{and}~~w_{1}=v_{2}.
		\end{align*} 
		
		\begin{claim}\label{special} 
			Assume that $t\ge 3$.
			\begin{wst}
				\item[{\rm (i)}] 
				$d_G(u_a^+)=s+t-2$ and $N_P(u_a^+)\cap [v_k,v_n]=\{u_s,\ldots, u_a,u_a^{+2},\ldots,u_{\ell-1}^{+2}\},$  for   $s \le a\le \ell-1$; 
				\item[{\rm (ii)}] 
				$d_G(w_b^-)=s+t-2$ and $N_P(w_b^-)\cap [v_1,v_{k'}]=\{w_2^{-2},\ldots, w_b^{-2},w_b,\ldots,w_{t-1}\},$ for   $2\le b\le t-1$. 
			\end{wst}
		\end{claim}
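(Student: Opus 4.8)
The plan is to prove (i) and then obtain (ii) for free by symmetry. Reversing $P$ interchanges the roles of $v_1$ and $v_n$, sends the interval $[v_1,v_{k'}]$ to $[v_k,v_n]$, and turns the backward shift $\,{}^{-q}$ into the forward shift $\,{}^{+p}$; since $d_G(v_1)=d_G(v_n)=s+t-2$ by \eqref{eq4}, the reversed path is an equally extremal Hamilton path satisfying the same derived relations \eqref{3.2}--\eqref{34}, so statement (ii) for $w_b^-$ is literally statement (i) applied to the reversal. Thus I fix $a$ with $s\le a\le \ell-1$ (a nonempty range precisely because $t\ge 3$), write $u_a=v_i$ and $u_a^+=v_{i+1}$, and form the rotated Hamilton path $P_a':=v_1\overrightarrow{P}u_a\,v_n\overleftarrow{P}u_a^+$, whose endpoints are $v_1$ and $u_a^+$ (this is legitimate because $v_n\sim u_a$, and $u_a^+\neq v_n$ since $u_{\ell-1}<u_\ell=v_{n-1}$).

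For the degree assertion I argue as follows. Since $G$ is non-hamiltonian, the two ends of $P_a'$ cannot be adjacent, so $v_1\not\sim u_a^+$; combining this with $\sigma_2(G)\ge 2\widetilde{\alpha}(G)-2=2\ell$ and $d_G(v_1)=\ell$ gives $d_G(u_a^+)\ge \ell$. In the other direction the extremal choice of $P$ forces $d_G(u_a^+)\le \ell$: if $d_G(u_a^+)>\ell$ then the sorted endpoint-degree pair $(\ell,d_G(u_a^+))$ of $P_a'$ would lexicographically exceed the pair $(\ell,\ell)$ achieved by $P$, contradicting the choice of $P$. Hence $d_G(u_a^+)=\ell=s+t-2$, which is the first half of (i).

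For the neighbourhood I exploit the fact that $P_a'$ is now itself an extremal Hamilton path with both endpoints of degree exactly $\ell=\widetilde{\alpha}(G)-1$, so the entire derivation \eqref{3.2}--\eqref{34} applies verbatim to $P_a'$ (with its own cut vertices and its own split of $N(u_a^+)$ into a low part of size $s-1$ and a high part of size $t-1$). I then translate this back into $P$-coordinates via the reversal underlying $P_a'$: the map fixes $v_j$ for $j\le i$ and reflects $v_m\mapsto v_{m-1}$ for $m\ge i+2$, while the basic rotation constraint reads ``if $u_a^+\sim x$ then the $P_a'$-successor of $x$ is not adjacent to $v_1$.'' Using this together with the edges $u_a\sim u_a^+$ and $u_a^+\sim u_a^{+2}$ and the $v_n$-neighbours $u_s,\dots,u_a$ already present, I rule out every vertex of $[v_k,v_n]$ outside the claimed set and retain every vertex inside it; a count against the exact degree $d_G(u_a^+)=\ell$, of which exactly $t-1$ neighbours must lie in $[v_k,v_n]$ (the analogue of \eqref{eq3}--\eqref{eq4} for $P_a'$), forces equality and yields $N_P(u_a^+)\cap[v_k,v_n]=\{u_s,\dots,u_a,u_a^{+2},\dots,u_{\ell-1}^{+2}\}$.

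The main obstacle is the bookkeeping in this last translation. The reversal splits $[v_k,v_n]$ into the forward block $[v_k,u_a]$, on which the successor map is unchanged, and the reflected block $[u_a^{+2},v_n]$, on which it runs backwards, so the forbidden positions coming from $N_P(v_1)$ must be matched carefully to the shifted vertices $u_j^{+2}$ rather than to the $u_j$ themselves, and the boundary vertices $u_a$, $u_a^{+2}$ and $u_\ell=v_{n-1}$ each need separate checking. A second subtlety is a potential circularity: running the count in (i) appears to need the exact positions of $N_P(v_1)$, which is exactly what (ii) controls. I expect to resolve this either by processing the indices $a$ (and $b$) monotonically so that each determined neighbourhood feeds the next, or by extracting both counts simultaneously from the symmetric pair \eqref{eq3}--\eqref{eq4} and \eqref{34}, so that (i) and (ii) are established in parallel without either presupposing the other.
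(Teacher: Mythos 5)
Your setup (the rotation $P_a':=v_1\overrightarrow{P}u_a\,v_n\overleftarrow{P}u_a^+$) is exactly the paper's, and your treatment of the degree half of (i) is correct and in fact takes a cleaner route than the paper does: the paper deduces the upper bound $d_G(u_a^+)\le s+t-2$ from structural containments proved via bipartite-hole arguments, whereas you get it purely from the extremality of $P$ (if $d_G(u_a^+)>\ell$ then $P_a'$, whose endpoint degrees are $\ell$ and $d_G(u_a^+)$, would beat $P$, whose endpoint degrees are both $\ell$ by Claim~\ref{c0} and \eqref{eq4}, in the lexicographic choice of $P$), and the lower bound from $\sigma_2(G)\ge 2\widetilde{\alpha}(G)-2$ together with $v_1\not\sim u_a^+$. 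That part stands.

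The genuine gap is in the second half of (i), the identification of $N_P(u_a^+)\cap[v_k,v_n]$. Applying the machinery \eqref{3.2}--\eqref{34} to $P_a'$ yields only \emph{cardinalities} --- $s-1$ neighbours in $[v_1,v_{k-1}]$ and $t-1$ in $[v_k,v_n]$ --- just as for $P$ it yields $|A_1|=s-1$, $|A_2|=t-1$ without locating $A_2$. To pin down the \emph{positions} one must align the neighbours of $u_a^+$ with those of $v_n$, and the only tool that achieves this is the hole-maximality step that your sketch never invokes: by rotation (the analogue of \eqref{3.2} for $P_a'$, i.e.\ \eqref{3.5}), the $P_a'$-successor set of $N(u_a^+)\cap[v_k,v_n]$ has no edges to the $s$-set $(N_P(v_1)\cap[v_1,v_k])^-$; the set $A_2^+$ also has no edges to that $s$-set and already attains the maximum size $t-1$ permitted by the absence of an $(s,t)$-bipartite-hole; hence the union of the two successor sets still has size at most $t-1$, which forces the former \emph{inside} $A_2^+$ (this is \eqref{37}). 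That containment is what converts ``$t-1$ neighbours somewhere in $[v_k,v_n]$'' into ``exactly $\{u_s,\ldots,u_a,u_a^{+2},\ldots,u_{\ell-1}^{+2}\}$.'' Your substitute --- the constraint that the $P_a'$-successor of each neighbour of $u_a^+$ is non-adjacent to $v_1$, plus a count against $d_G(u_a^+)=\ell$ --- cannot do this: avoiding $N_G(v_1)$ while having cardinality $t-1$ is compatible with many position sets, and nothing in your argument ever brings $N_P(v_n)$ into play. Relatedly, you treat the adjacencies $u_a^+\sim u_j$ for $s\le j\le a$ as ``already present,'' but they are part of the conclusion, not a hypothesis. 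Finally, the circularity you anticipate between (i) and (ii) is a symptom of the same issue: the correct argument needs only the cardinalities $|N_P(v_1)\cap[v_1,v_k]|=s$ (definition of $k$) and $|N_P(v_1)\cap[v_k,v_n]|=t-1$ (Claim~\ref{c0}), never the exact positions of $N_P(v_1)$, so (i) is self-contained and (ii) then follows by the symmetry you describe; no monotone or simultaneous scheme is needed.
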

		\begin{proof}[Proof of Claim~\ref{special}]  
			(i) 
			For  $s \le a\le \ell-1$,    
			we consider the path $v_1\overrightarrow{P}u_a v_n\overleftarrow{P}u_a^+$. 
            Then $v_1\not\sim u_a^+.$ 
			Similar to \eqref{3.2}, we have 
			\begin{align}\label{3.5}
				E\left(  \left( N_P(v_1)\cap [v_1,v_k] \right)^-, \left( N_P(u_a^+)\cap [v_k,u_a^-] \right)^+ \cup \left( N_P(u_a^+)\cap[u_a^+,v_n] \right)^- \cup\{v_n\} \right)=\emptyset. 
			\end{align}
			By \eqref{3.2}, \eqref{eq4}, \eqref{3.5} and the fact 
			that $G$ contains no $(s,t)$-bipartite-hole, 
			we have 
			\begin{align}\label{37} 
				\left( N_P(u_a^+)\cap [v_k,u_a^-] \right)^+ \cup \left( N_P(u_a^+)\cap[u_a^+,v_n] \right)^- \cup\{v_n\}
				\subseteq  A_2^+.
			\end{align} 
			Thus 
			\begin{align*} 
				N_P(u_a^+)\cap [v_k,u_a^-] \subseteq  A_2\cap [v_k,u_a^-]=N_P(v_n)\cap [v_k,u_a^-]~~\text{and}~~N_P(u_a^+)\cap[u_a^+,v_n]\subseteq  A_2^{+2}\cap[u_a^+,v_n].
			\end{align*} 
			Therefore, 
			\begin{align} 
				&N_P(u_a^+)\cap[v_k,u_a]\subseteq N_P(v_n)\cap [v_k,u_a]=\{u_s,\ldots, u_{a}\}~~\text{and}~~\label{13}\\ &N_P(u_a^+)\cap[u_a^+,v_n]\subseteq  N_P^{+2}(v_n)\cap[u_a^+,v_n] =\{u_a^{+2},\ldots, u_{\ell-1}^{+2}\}.\label{12}
			\end{align} 
			Based on  $\left( N_P(u_a^+)\cap [v_k,u_a^-] \right)^+ \cap \left( N_P(u_a^+)\cap[u_a^+,v_n] \right)^-=\emptyset$, \eqref{37} and $|A_2^+|=|A_2|=t-1$, we have 
			\begin{align} 
				\left|N_P(u_a^+)\cap [v_k,v_n] \right|&=\left| N_P(u_a^+)\cap [v_k,u_a^-] \right|+ \left| N_P(u_a^+)\cap[u_a^+,n] \right| +1\notag
				\\&=\left| \left( N_P(u_a^+)\cap [v_k,u_a^-] \right)^+ \right| +\left| \left( N_P(u_a^+)\cap[u_a^+,n] \right)^-\right|+1\le t-1.\label{eq12} 
			\end{align} 
			Next, we show that 
            \begin{align}\label{e8}
            |N_P(u_a^+)\cap [v_1,v_{k-1}]|\le s-1. 
            \end{align}
			If $u_a\not\sim v_1,$ then 
			$$E\left( \left( N_P(v_1)\cap [v_k,u_a^-] \right)^+ \cup \left( N_P(v_1)\cap[u_a^{+},v_n] \right)^-\cup\{v_1\},\left( N_P(u_a^+)\cap [v_1,v_{k-1}] \right)^+ \right)=\emptyset,$$
			where $\left| \left( N_P(v_1)\cap [v_k,u_a^-] \right)^+ \cup \left( N_P(v_1)\cap[u_a^{+},v_n] \right)^-\cup\{v_1\}\right|=\left|N_P(v_1)\cap [v_k,v_n]\right|+1=t.$  Combining this with the fact that $G$ contains no $(s,t)$-bipartite-hole gives \eqref{e8}, as desired. 
			Now we assume that $u_a\sim v_1.$ Since 
			$G$ is non-hamiltonian, we have $E\left( v_n,\left( N_P(u_a^+)\cap [v_1,v_{k-1}] \right)^+ \right)=\emptyset.$ 
			Therefore, 
			$$E \left( \left( N_P(v_1)\cap [v_k,u_a^-] \right)^+ \cup \left( N_P(v_1)\cap[u_a^{+},v_n] \right)^-\cup\{v_1,v_n\},\left( N_P(u_a^+)\cap [v_1,v_{k-1}] \right)^+ \right)=\emptyset,$$ 
			where $\left| \left( N_P(v_1)\cap [v_k,u_a^-] \right)^+ \cup \left( N_P(v_1)\cap[u_a^{+},v_n] \right)^-\cup\{v_1,v_n\} \right|=|N_P(v_1)\cap [v_k,v_n]|-1+2=t.$ 
			Together with the fact that $G$ contains no $(s,t)$-bipartite-hole, we get \eqref{e8}, as desired.  
			
			In view of \eqref{eq12}-\eqref{e8}, we have  $d_G(u_a^+)\le s+t-2=\widetilde{\alpha}(G)-1.$ 
            Recall that $v_1\not\sim u_{a}^+$. 
            Thus $$d_G(v_1)+d_G(u_a^+)\ge 2\widetilde{\alpha}(G)-2.$$ 
            Therefore, $d_G(u_a^+)=\widetilde{\alpha}(G)-1=s+t-2.$ 
			Hence every inequality in \eqref{eq12}-\eqref{e8} becomes
equality.  
			Now combining this with  \eqref{13} and \eqref{12},  
			we infer that 
			every inequality in \eqref{13}-\eqref{12} becomes equality. 
			Hence   
			$N_P(u_a^+)\cap [v_k,v_n]=\{u_s,\ldots, u_a,u_a^{+2},\ldots,u_{\ell-1}^{+2}\}.$ This proves (i).  
			 A similar argument yields a proof of (ii).  This proves Claim~\ref{special}.
		\end{proof}

		\begin{claim}\label{inden}		
			$A_2^+$ (resp., $B_1^-$) is an independent set of $G$. 
		\end{claim}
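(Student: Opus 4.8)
Both halves of the claim are interchanged by the involution that reverses $P$ and swaps its two ends. Since $d_G(v_1)=d_G(v_n)=\ell$, the reversed path $v_n\overleftarrow{P}v_1$ again realizes the extremal choice defining $P$, and the involution carries $k$ to $k'$, the set $A_2=N_P(v_n)\cap[v_k,v_n]$ to $B_1=N_P(v_1)\cap[v_1,v_{k'}]$, and the successor map to the predecessor map; hence it sends $A_2^+$ to $B_1^-$. It therefore suffices to prove that $A_2^+$ is independent. If $t=2$ this is immediate, since $|A_2^+|=|A_2|=t-1=1$ and $A_2^+=\{v_n\}$. So assume $t\ge3$, so that Claim~\ref{special} applies.

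Suppose, toward a contradiction, that $u_a^+\sim u_b^+$ for some $s\le a<b\le\ell$. Using the edge $v_nu_a$ of $P$ and the chord $u_a^+u_b^+$, splice $P$ into
\[
Q:=v_1\overrightarrow{P}u_a\,v_n\overleftarrow{P}u_b^+\,u_a^+\overrightarrow{P}u_b .
\]
Its three pieces $[v_1,u_a]$, $[u_b^+,v_n]$ and $[u_a^+,u_b]$ partition $V(P)$, so $Q$ is a Hamilton $(v_1,u_b)$-path (the case $b=\ell$, in which $u_b^+=v_n$ and the middle piece is empty, is included). As $G$ is non-hamiltonian, $v_1\not\sim u_b$. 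Applying the extremal choice of $P$ to $Q$ rules out $d_G(u_b)>\ell$ (otherwise the endpoint-degree pair of $Q$ would beat that of $P$), so $d_G(u_b)\le d_G(v_1)=\ell$; combined with $v_1\not\sim u_b$ and $\sigma_2(G)\ge2\widetilde{\alpha}(G)-2=2\ell$, this forces $d_G(u_b)=\ell$. Thus $Q$ is itself an extremal Hamilton path whose two ends are non-adjacent and both of degree $\ell$.

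To finish, first note that, by Claim~\ref{special}(i) for $u_a^+$, the containment $u_b^+\in N_P(u_a^+)$ can only hold with $u_b^+=u_j^{+2}$ for some $a\le j\le\ell-1$, i.e. $u_b=u_j^+$; hence $A_2$ contains the consecutive pair $u_j,u_j^+=u_b$ of neighbours of $v_n$. I would now rerun the analysis leading to Claim~\ref{c0}, \eqref{3.2}--\eqref{34} and Claim~\ref{special} on the extremal path $Q$, and compare the neighbourhood of the end $u_b$ obtained there with the one pinned down from $P$ by Claim~\ref{special}. These two descriptions cannot coexist: the discrepancy either exhibits a chord closing $Q$ into a Hamilton cycle, or supplies a common non-neighbour of $(N_P(v_1)\cap[v_1,v_k])^-$ lying outside $A_2^+$, which enlarges the empty bipartite pair of \eqref{3.2} to a forbidden $(s,t)$-bipartite-hole; either conclusion is absurd. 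The main obstacle is exactly this comparison. Since the vertex order along $Q$ differs from that along $P$ only on the spliced segments, aligning the two neighbourhood descriptions of $u_b$ and locating the decisive chord (or the extra non-neighbour) is the technical heart of the proof, whereas the rotation and the degree bookkeeping preceding it are routine.
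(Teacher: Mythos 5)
Your opening moves are sound and essentially parallel the paper's: the reduction to $A_2^+$ by reversing $P$ (legitimate because $d_G(v_1)=d_G(v_n)=\ell$, so the reversed path is again an extremal choice), the trivial case $t=2$, the splice $Q=v_1\overrightarrow{P}u_a\,v_n\overleftarrow{P}u_b^+\,u_a^+\overrightarrow{P}u_b$, the deduction $d_G(u_b)=\ell$ from extremality of $P$ together with $\sigma_2(G)\ge 2\ell$, and the observation via Claim~\ref{special}(i) that $u_b^+=u_j^{+2}$, i.e.\ $u_b^-\in A_2$. All of this is correct. But the proof stops exactly where the real work begins. Your final paragraph is a plan, not an argument: you assert that rerunning the machinery of Claim~\ref{c0}, \eqref{3.2}--\eqref{34} and Claim~\ref{special} on $Q$ produces a neighbourhood description of $u_b$ that ``cannot coexist'' with the one obtained from $P$, but you never exhibit the contradiction, and there is no a priori reason one exists. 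The conclusions of that machinery applied to $Q$ are phrased relative to $Q$'s own vertex ordering and its own parameters (its own $k$, its own $A_2$), which differ from those of $P$ on the spliced segments; nothing prevents both sets of constraints from being simultaneously satisfiable. ``Compare and find a discrepancy'' is precisely the step that needs a proof, and you acknowledge this yourself by calling it the main obstacle.

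For contrast, the paper does not compare two extremal frames at all. It stays inside $P$'s frame and iterates Claim~\ref{special}(i) starting from the chord $u_{q_1}^+\sim u_{q_2}^+$ to force a rigid structure: first that the chord can be upgraded to one with $|u_{i_1}\overrightarrow{P}u_{i_2}|\ge 3$, then, by repeated substitutions, that $V(u_s\overrightarrow{P}v_{n-1})\subseteq N_G(v_n)$ and $u_s=v_{n-t+1}$, i.e.\ $A_2\cup\{v_n\}$ is the consecutive tail of $P$ (statement \eqref{et1}). Only then does the contradiction come, and it requires a genuine case split: when $s=t$ it is either a $(t,t)$-bipartite-hole or one of the six Hamilton cycles of Table~\ref{t1}; when $s\le t-1$ it forces $s=1$, $k=2$, $k'=n-1$ and makes $v_{n-t+1}$ a cut vertex, contradicting $2$-connectivity. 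None of this block-structure derivation or case analysis appears in your proposal, so as it stands the proof has a genuine gap at its decisive step.
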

		\begin{proof}[Proof of Claim~\ref{inden}] 
			Suppose to the contrary that $A_2^+$ is not an independent set of $G$. Then $|A_2|=t-1\ge 2$. 
            Assume that $u_{q_1},u_{q_2}\in A_2$ with $q_1<q_2$ and  $u_{q_1}^+\sim u_{q_2}^+.$ 
			We assert that 
             \begin{align}\label{et1}
                 \text{
                 $u_s=v_{n-t+1}$~~and~~$V\left( u_s \overrightarrow{P} v_{n-1}\right)\subseteq N_G(v_n).$ 
                 }
             \end{align} 
If $|A_2|=2$, then $A_2=\{u_{\ell-1}, u_{\ell}\}=\{u_{q_1},u_{q_2}\}$ and $s=\ell-1.$    Combining with $|A_2|=2$ and $u_{q_1}^+\sim u_{q_2}^+=v_n$, we have  $u_{q_1}=u_{\ell-1}=v_{n-2}$, and thus \eqref{et1} holds. 

Next we assume that  $|A_2|\ge 3.$
We show that there exist two integers 
			$s\le i_1<i_2\le \ell$ such that 
			$\left|u_{i_1}\overrightarrow{P} u_{i_2}  \right|\ge 3$ and 
			$u_{i_1}^+\sim u_{i_2}^+.$ 
                If $|u_{q_1}\overrightarrow{P}u_{q_2}|\ge 3$, then we write $q_1=i_1$ and $q_2=i_2$, and we are done. 
                Now assume that $|u_{q_1}\overrightarrow{P}u_{q_2}|=2$. Since $|A_2|\ge 3,$ there exists a neighbor $u_{q_3}$ of $v_n$. If $q_3<q_1,$ by Claim~\ref{special}, substituting $a=q_3$, we have $u_{q_3}^+\sim u_{q_1}^{+2}=u_{q_2}^+.$  
                Then we write $q_3=i_1$ and $q_2=i_2$ and we are done. 
                Now $|u_{i_1}\overrightarrow{P}u_{i_2}|\ge 3$ and $u_{i_1}^+\sim u_{i_2}^+.$ If $q_3>q_2,$ by Claim~\ref{special}, substituting $a=q_3$, we have $u_{q_3}^+\sim u_{q_2}=u_{q_1}^+$. Then we write $q_3=i_1$ and $q_1=i_2,$ and we are done.

			We assert that 
			$V\left( u_{i_1} \overrightarrow{P} u_{i_2}\right)\subseteq N_G(v_n).$
			Substituting $a= i_1$ in Claim~\ref{special}(i), we obtain that there exists an integer $i_3$ such that $u_{i_2}^+=u_{i_3}^{+2}$.  
			Hence, $u_{i_2}^-\sim v_n.$ 
			If $\left|u_{i_1}\overrightarrow{P} u_{i_2}  \right|=3$, we are done. 
			If $\left|u_{i_1}\overrightarrow{P} u_{i_2}  \right|\ge 4$, 
			then  substituting  $a=i_2$ in Claim~\ref{special}(i) gives that there exists an integer $i_4$ such that 
			$u_{i_4}=u_{i_1}^+$. 
			If $\left|u_{i_1}\overrightarrow{P} u_{i_2}  \right|= 4$, 
			we are done. 
			If $\left|u_{i_1}\overrightarrow{P} u_{i_2}  \right|\ge 5$, 
			then  substituting  $a=i_4$ in Claim~\ref{special}(i) gives that 
			$u_{i_4}^+ \sim u_{i_3}^{+2}=u_{i_2}^+.$ 
			Now substituting  $a=i_2$ in Claim~\ref{special}(i) gives that    
			there exists an integer $i_5$ such that 
			$u_{i_5}=u_{i_4}^+$. 
			Hence 
			$u_{i_4}^+\sim v_n.$ 
			If $\left|u_{i_1}\overrightarrow{P} u_{i_2}  \right|= 5$, 
			we are done. 
			If $\left|u_{i_1}\overrightarrow{P} u_{i_2}  \right|\ge 6$, then repeating the above argument  successively finitely
			many times, we obtain that $V\left( u_{i_1} \overrightarrow{P} u_{i_2}\right)\subseteq N_G(v_n).$ 
			
			We then show that $V\left( u_{i_2} \overrightarrow{P} v_{n-1}\right)\subseteq N_G(v_n).$ If $i_2=\ell$, we are done. Thus we assume that 
			$i_2\le \ell-1.$ Note that 
			$u_{\ell}^{+}=v_n\sim u_{i_2}=u_{i_3}^{+}.$ 
			Substituting  $a=i_3$ in Claim~\ref{special}(i) gives that 
			$u_{\ell-1}^{+2}=v_n$, and thus $u_{\ell-1}=v_{n-2}$. 
			If $i_2=\ell-1$, we are done. 
			If $i_2\le \ell-2,$ then 
			substituting  $a=\ell-1$ in Claim~\ref{special}(i) gives that 
			$u_{\ell-1}^+\sim u_{i_2}=u_{i_3}^+.$ 
			Now substituting  $a=i_3$ in Claim~\ref{special}(i) gives that 
			 $u_{\ell-1}^+=u_{\ell-2}^{+2}.$     
        If $i_2= \ell-2,$ we are done. 
         If $i_2\le \ell-3,$ 
        then repeating the above argument  successively finitely
many times, we obtain that $V\left( u_{i_2} \overrightarrow{P} v_{n-1}\right)\subseteq N_G(v_n).$ 
Therefore $V\left( u_{i_1} \overrightarrow{P} v_{n-1}\right)\subseteq N_G(v_n).$ 

Next we show that $V\left( u_s \overrightarrow{P} u_{i_1}\right)\subseteq N_G(v_n).$ 
 If $i_1=s,$ we are done. 
If $i_1\ge s+1$, then substituting  $a=s$ in Claim~\ref{special}(i) gives that 
            $u_{s}^+\sim u_{\ell-1}^{+2}=v_n.$  
            Hence $u_{s+1}=u_{s}^+$. 
            If $i_1=s+1,$ we are done. 
            If $i_1\ge s+2,$ then repeating the above argument  successively finitely
many times, we obtain that $V\left( u_{s} \overrightarrow{P} u_{i_1}\right)\subseteq N_G(v_n)$.  
By the above arguments, we have   
$V\left( u_{s} \overrightarrow{P} v_{n-1}\right)\subseteq N_G(v_n)$ 
 and thus   $V\left( u_{s} \overrightarrow{P} u_{i_1}\right)=A_2.$
Together with $|A_2|=t-1$, we have  $u_s=v_{n-t+1}$. This completes the proof of \eqref{et1}.

			We now return to the proof of Claim~\ref{inden}. 
            We first consider the case $s=t\ge 3$. Then $k'<k$. Since $G$ is non-hamiltonian, we have $k'\ne k-1$. Thus, $k'\le k-2.$ 
            By Claim~\ref{c0}, we have 
            $$|N_G(v_1)\cap [v_k,v_n]|=t-1\ge 2.$$ 
            Based on the fact that $G$ is non-hamiltonian and  \eqref{et1},  
            we have 
            $$N_G(v_1) \cap V\left( u_{s+1} \overrightarrow{P} v_{n}\right)=\emptyset.$$ 
            Then 
            \begin{align}\label{e4}
                |N_G(v_1) \cap[v_k,u_s^-]|\ge t-2\ge 1.
            \end{align}
            By Claim~\ref{special}(i) and \eqref{et1}, we have  
            \begin{align}\label{e3}
                E\left([u_s^+,v_n],[v_k,u_s^-]\right)=\emptyset.
            \end{align} 
            We assert that $v_{k+1}\not\sim v_{k'+1}.$ 
            Otherwise, $G$ contains a Hamilton cycle $v_1\overrightarrow{P}v_{k'}v_n\overleftarrow{P}v_{k+1}v_{k'+1}\overrightarrow{P}v_kv_1,$ a contradiction. 
            
            If $v_{k+1}\ne u_s$, then 
            $v_{k+1}\in [v_k,u_s^-]$. 
            Combining this with \eqref{e3} and $v_{k+1}\not\sim v_{k'+1},$ 
            we have 
             $$E \left([u_s^+,v_n]\cup\{v_{k'+1}\},B_1^-\cup \{v_{k+1}\} \right)=\emptyset,$$ which implies that there exists a $(t,t)$-bipartite-hole in $G$, 
             a contradiction.  
             
             Next we assume that $v_{k+1}=u_s.$ 
             Then $v_k=u_s^-$ and every inequality in \eqref{e4} becomes equality. Thus $t=s=3.$  
             By \eqref{et1}, we have $u_s=v_{n-2}$. Then $v_k=v_{n-3}$. 
             If $E\left(\{u_1^-,u_1^+,v_{k'+1}\}, \allowbreak \{v_{n-2},v_{n-1}\} \right)\neq\emptyset,$ then by Table~\ref{t1}, 
             $G$ contains a Hamilton cycle, a contradiction. 
             Hence $$E\left(\{u_1^-,u_1^+,v_{k'+1}\},\allowbreak \{v_{n-2},v_{n-1}\} \right)=\emptyset.$$ 
             Combining with the fact that 
             $N_G(v_n)=\{u_1,v_{k'},v_{n-2},v_{n-1}\}$ gives 
             $$E\left(\{u_1^-,u_1^+,v_{k'+1}\},\{v_{n-2},v_{n-1},v_n\} \right)=\emptyset,$$  which implies that $G$ contains a $(3,3)$-bipartite-hole, a contradiction. 

            \begin{table}[ht]
		\centering
		\caption{There exists an edge  $e\in E\left(\{u_1^-,u_1^+,v_{k'+1}\},\{v_{n-2},v_{n-1}\} \right)$}\label{t1}
		\begin{tabular}{@{}lll@{}} 
			\toprule 
			The edge $e$ &   
			A Hamilton cycle in $G$ \\ 
			\midrule 
			$e=u_1^- v_{n-2}:$ & $v_1 \overrightarrow{P}  u_1^- v_{n-2} 
			\overrightarrow{P} v_{n}  u_{1} \overrightarrow{P} v_{n-3} v_1$ \\
			$e=u_1^- v_{n-1}:$ & $v_1 \overrightarrow{P}  u_1^- v_{n-1} v_{n-2} v_n u_1 \overrightarrow{P} v_{n-3}   v_1$ \\
            $e=u_1^+ v_{n-2}:$ &  $v_1 \overrightarrow{P}  u_1 v_{n}\overleftarrow{P} v_{n-2}  u_1^+ \overrightarrow{P} v_{n-3}   v_1$ \\ 
            $e=u_1^+ v_{n-1}:$ &  $v_1 \overrightarrow{P}  u_1 v_{n} v_{n-2}  v_{n-1} u_1^+ \overrightarrow{P} v_{n-3}   v_1$ \\ 
            $e=v_{k'+1} v_{n-2}:$ &  $v_1 \overrightarrow{P}  v_{k'} v_{n} \overleftarrow{P} v_{n-2} v_{k'+1} \overrightarrow{P} v_{n-3}   v_1$ \\ 
            $e=v_{k'+1} v_{n-1}:$ &  $v_1 \overrightarrow{P}  v_{k'} v_{n}  v_{n-2} v_{n-1} v_{k'+1} \overrightarrow{P} v_{n-3}   v_1$ \\ 
			\bottomrule 
		\end{tabular}
	\end{table}

			Now we assume that $s\le t-1.$ 
            Then $k\le k'.$ 
            Together with  \eqref{et1}, 
			we have $n-t+1\le k'$.  
            Based on  
            $V\left( v_{n-t+1} \overrightarrow{P} v_{n-1}\right)\subseteq N_G(v_n)$ and  \eqref{34}, we have  $B_2=\emptyset$ and   $s=1$. Then by the definition of $k$ and $k'$, we have $k=2$ and $k'=n-1$. 
            By \eqref{et1} and the fact that  $G$ is non-hamiltonian, 
            $v_1$ is nonadjacent to any vertex of $V\left( v_{n-t+2} \overrightarrow{P} v_{n}\right).$ 
            Combining this with Claim~\ref{special}(i), 
            we have 
             \begin{align*}
                \text{
             $N_G (v_j)=N_G (v_j)\cap[v_2,v_n] = V\left( v_{n-t+1} \overrightarrow{P} v_{n}\right)\setminus\{v_j\},$ ~~for 
             $j\in\{n-t+2,\ldots, n\}.$   
             }
            \end{align*}
             Therefore, $v_{n-t+1} $ is a cut-vertex of $G$, which contradicts the assumption that $G$ is 2-connected. 
			This proves that  $A_2^+$ is an independent set of $G$. By symmetry,  $B_1^-$ is an  independent set of $G$. 
			This completes the proof of Claim~\ref{inden}.
		\end{proof} 
		We now return to the proof of Theorem~\ref{thmctOre} and proceed by considering the following two cases.
		\vskip 3mm
		\noindent{\bf Case 1.} $s\le t-1.$  
		\vskip 3mm
		
		In this case $k\le k'$.       
        If $t=2$, then $s=1$. By Claim~\ref{c0},  we have $d_G(v_1)=s+t-2=1,$ which contradicts the assumption that $G$ is $2$-connected. Next we assume that $t\ge 3.$ 
		We write
		\begin{align*}
			S_1 := (N_P(v_1)\cap[v_1,v_{k}])^-=\{w_1^-,\ldots,w_s^-\}. 
		\end{align*}
        Combining  with Claim \ref{inden} and \eqref{3.2} gives that  \begin{align}\label{epq}
                \text{
		$X:=S_1\cup A_2^+$ is  an independent set of $G$. 
        }\end{align}
        Together with the fact that  $G$ contains no $(s,t)$-bipartite-hole, we have 
    \begin{align}\label{epp}
                \text{
            $d_X(z)\ge t,$ for each vertex $z\in V(G)\setminus X$.
		}
            \end{align}  

        	\begin{claim}\label{middle} 
            \begin{wst}
			\item[{\rm (i)}] $u_s=v_k=w_s;$ 			
			\item[{\rm (ii)}] 
            If $s\le t-2,$ then
				 $\left|u_{i}^+ \overrightarrow{P} u_{i+1}^+\right|=3,$ for $s\le i\le t-2.$ 
            \end{wst}
		\end{claim}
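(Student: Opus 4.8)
The plan is to dispatch both parts with one uniform device: for a vertex $z$ that is in the wrong place, I use Claim~\ref{special}(i) to count exactly how many neighbours $z$ has in $A_2^+$, check that $z\notin X$, and then contradict the lower bound $d_X(z)\ge t$ of \eqref{epp}, using that $z$ can have at most $|S_1|=s\le t-1$ additional neighbours inside the independent set $X=S_1\cup A_2^+$. Throughout I work under the standing hypotheses of this case, $s\le t-1$ and (after the $t=2$ reduction) $t\ge 3$, so that Claim~\ref{special}(i) applies for every index $s\le a\le\ell-1$.

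For part (i) the identity $v_k=w_s$ is immediate: by the choice of $k$ the segment $[v_1,v_k]$ carries exactly $s$ neighbours of $v_1$ with $v_1\sim v_k$, so $v_k$ is the $s$-th neighbour of $v_1$ along $\overrightarrow P$, namely $w_s$ (and $w_s\in B_1$ as $s\le t-1$). To obtain $u_s=v_k$ I would prove $v_n\sim v_k$ by contradiction. If $v_k\not\sim v_n$ then $u_s>v_k$, so $\min A_2^+=u_s^+\ge v_{k+2}$ while $\max S_1=w_s^-=v_{k-1}$; hence $v_k\notin X$ and \eqref{epp} gives $d_X(v_k)\ge t$. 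Now I compute $d_{A_2^+}(v_k)$: for $s\le a\le\ell-1$, Claim~\ref{special}(i) says $v_k\sim u_a^+$ iff $v_k\in\{u_s,\dots,u_a\}\cup\{u_a^{+2},\dots,u_{\ell-1}^{+2}\}$; the first block is excluded since $v_k$ is not a neighbour of $v_n$, and $v_k=u_m^{+2}$ would force $u_m=v_{k-2}\in A_1$, hence $m\le s-1<s\le a$, excluding the second block. With the case $a=\ell$ (the vertex $v_n$) excluded directly, $v_k$ has no neighbour in $A_2^+$, so $d_X(v_k)=d_{S_1}(v_k)\le s\le t-1<t$, a contradiction. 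Thus $u_s=v_k=w_s$.

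For part (ii) fix $s\le i\le t-2$ and suppose $u_{i+1}\ge u_i^{+3}$. Then $z:=u_i^{+2}$ lies strictly between the consecutive $v_n$-neighbours $u_i$ and $u_{i+1}$, so $z\not\sim v_n$, and as in (i) one checks $z\notin X$, giving $d_X(z)\ge t$ by \eqref{epp}. Applying Claim~\ref{special}(i) the same way, $z=u_i^{+2}\sim u_a^+$ holds iff $u_i^{+2}\in\{u_a^{+2},\dots,u_{\ell-1}^{+2}\}$, i.e. iff $a\le i$; hence the neighbours of $z$ in $A_2^+$ are exactly $u_s^+,\dots,u_i^+$, so $d_{A_2^+}(z)=i-s+1$. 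Consequently $d_X(z)\le(i-s+1)+|S_1|=i+1\le t-1<t$, again contradicting \eqref{epp}. Therefore $u_{i+1}=u_i^{+2}$, so $u_i^+\overrightarrow P u_{i+1}^+$ has exactly three vertices, which is (ii); the hypothesis $s\le t-2$ is used precisely to keep $i+1\le t-1$ so that the final strict inequality holds.

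The steps demanding the most care are the placement checks $z\notin X$ (needed to invoke \eqref{epp}) and the two membership computations in $A_2^+$, both of which rest on reading Claim~\ref{special}(i) correctly and on the boundary indices $a=\ell$ (the vertex $v_n$) and the degenerate case $k=2$ (which forces $s=1$), to be dispatched separately. Beyond this bookkeeping I do not expect a genuine obstacle, since the driving estimate — at most $s$ neighbours in $S_1$ together with a controlled count in $A_2^+$ cannot reach $t$ — is identical in both parts.
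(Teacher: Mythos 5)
Your part (i) is correct and is essentially the paper's own argument: assuming $u_s\neq v_k$, you check $v_k\notin X$, use Claim~\ref{special}(i) to see that $v_k$ has no neighbour in $A_2^+$, and contradict \eqref{epp} via $d_X(v_k)\le |S_1|=s\le t-1<t$. The two membership computations you spell out (the block $\{u_s,\ldots,u_a\}$ is excluded because $v_k\not\sim v_n$; the block $\{u_a^{+2},\ldots,u_{\ell-1}^{+2}\}$ is excluded because $v_k=u_m^{+2}$ would put $u_m=v_{k-2}$ in $A_1$, so $m\le s-1<a$) are exactly the right justifications, and here the case analysis is complete since $u_s\ge v_k$ holds automatically.

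Part (ii), however, has a genuine gap in the case analysis. You refute only the possibility $u_{i+1}\ge u_i^{+3}$ (i.e. $\left|u_i^+\overrightarrow{P}u_{i+1}^+\right|\ge 4$) and then conclude $u_{i+1}=u_i^{+2}$. But the negation of the claim also allows $u_{i+1}=u_i^{+}$, i.e. $\left|u_i^+\overrightarrow{P}u_{i+1}^+\right|=2$: two consecutive vertices of $P$ both adjacent to $v_n$. Your uniform device is silent in this case, because there is no vertex $z$ strictly between $u_i$ and $u_{i+1}$ to which \eqref{epp} could be applied, and nothing else in your writeup excludes it. The paper dispatches this case first, in one line, from Claim~\ref{inden}: since $A_2^+$ is independent (equivalently, by \eqref{epq}, since $X$ is independent), $u_{i+1}=u_i^+$ would make $u_i^+\in A_2^+$ adjacent to $v_n=u_\ell^+\in A_2^+$, an edge inside $A_2^+$, which is impossible. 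With that sentence added — and note you already possess the tool, since the independence of $X$ is what gives you \eqref{epp} — your counting for the remaining case, namely $d_{A_2^+}\left(u_i^{+2}\right)=i-s+1$ and hence $d_X\left(u_i^{+2}\right)\le i+1\le t-1<t$, is correct and coincides with the paper's estimate (the paper counts the complementary set $F=\{u_{i+1}^+,\ldots,u_\ell^+\}$ of size at least $s$, which yields the same bound).
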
 
		\begin{proof}[Proof of Claim~\ref{middle}] 
            (i) By the definition of $k$, $w_s=v_k$. Suppose to the contrary that $|v_k\overrightarrow{P} u_s|\ge 2.$ Clearly, $v_k\notin X.$
            By Claim~\ref{special}(i), $v_{k}$ is nonadjacent to any vertex of $A_2^+.$ 
            Since $|A_2^+|=t-1$, we have $d_X(v_k)\le s\le t-1,$
            a contradiction to \eqref{epp}.

			(ii) By Claim~\ref{inden}, we have $\left|u_{i}^+ \overrightarrow{P} u_{i+1}^+\right|\ge 3,$ for  $s\le i\le t-2$. To the contrary,  suppose that there exists an integer $s\le j\le t-2$ such that  $\left|u_{j}^+ \overrightarrow{P} u_{j+1}^+\right|\ge  4.$ Clearly, $u_j^{+2}\notin X.$
			By Claim~\ref{special}(i), $u_{j}^{+2}$ is nonadjacent to any vertex of $F:=\{u_{j+1}^+,\dots,u_{\ell}^+\}.$ 
            Since $|F|\ge s$, 
            we have $d_X(u_{j}^{+2})\le t-1,$
            a contradiction to \eqref{epp}.  
            This proves Claim~\ref{middle}. 
		\end{proof} 

       By the definition of $k',$ $u_{t-1}=v_{k'}$. Based on Claim~\ref{middle}, $|S_1|=s,$ $|B_1|=t-1$ and the fact that $G$ is non-hamiltonian, 
        we have 
        \begin{align*}
            \text{$u_i=w_i,$~~for $s\le i\le t-1.$}
        \end{align*}
        Combining with Claim~\ref{special}, 
        we have that 
        \begin{align*}
            \text{$u_i$ is adjacent to each vertex of $S_1\cup A_2^+,$~~for $s\le i\le t-1.$}
        \end{align*} 
		
		\begin{claim}\label{right} 
        \begin{wst}
				\item[{\rm (i)}] 
		  $|u_{i}^{+}\overrightarrow{P}u_{i+1}^+|=3$, for $t-1\le i\le \ell-1$; 
          \item[{\rm (ii)}] 
			$|w_{j-1}^{-}\overrightarrow{P}w_{j}^-|=3,$ for $2\le j\le s.$ 
          \end{wst}
		\end{claim}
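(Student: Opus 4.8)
The plan is to establish part (i) directly and then obtain part (ii) from it by the reversal symmetry of $P$. Since $d_G(v_1)=d_G(v_n)=s+t-2$ (Claim~\ref{c0} and \eqref{eq4}), the whole configuration is invariant under the involution $v_i\mapsto v_{n+1-i}$; this exchanges the sequences $u_1,\dots,u_\ell$ and $w_\ell,\dots,w_1$ (so that $w_j$ plays the role of $u_{\ell+1-j}$), exchanges $A_2^+$ with $B_1^-$, and carries the range $t-1\le i\le\ell-1$ of (i) exactly onto the range $2\le j\le s$ of (ii) because $\ell+1-s=t-1$. Hence it suffices to prove (i).

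The inequality $|u_i^+\overrightarrow{P}u_{i+1}^+|\ge 3$ is immediate: were it $2$ we would have $u_{i+1}=u_i^+$, so $u_i^+$ and $u_{i+1}^+$ would be consecutive along $P$ and therefore adjacent, contradicting that $A_2^+$ is independent (Claim~\ref{inden}). For the reverse inequality I first rule out the value $\ge 5$, i.e.\ the case where $u_{i+1}$ lies at least four positions to the right of $u_i$. Then $u_i^{+3}$ is a vertex strictly between $u_i^+$ and $u_{i+1}$ that is neither a neighbour of $v_n$ nor of the form $u_m^+$, so $u_i^{+3}\notin X$; and by the exact description of $N_P(u_m^+)\cap[v_k,v_n]$ in Claim~\ref{special}(i) it is nonadjacent to every vertex of $A_2^+$. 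Consequently $d_X(u_i^{+3})=d_{S_1}(u_i^{+3})\le|S_1|=s<t$, contradicting \eqref{epp}. This leaves only $|u_i^+\overrightarrow{P}u_{i+1}^+|\in\{3,4\}$, and the whole difficulty is to exclude the value $4$, that is, the case $u_{i+1}=u_i^{+3}$ in which a single vertex $z:=u_i^{+2}$ sits strictly between the shifted neighbour $u_i^+$ and $u_{i+1}$.

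For this remaining case Claim~\ref{special}(i) shows that $z$ is adjacent in $A_2^+$ to exactly $\{u_s^+,\dots,u_i^+\}$, so $d_{A_2^+}(z)=i-s+1$, and since $z\notin X$, \eqref{epp} forces $d_{S_1}(z)\ge s+t-i-1\ge 2$. Thus $z$ is joined to a prescribed nonempty set of vertices $w_b^-\in S_1$. The intended contradiction is a Hamilton cycle: combining such an edge $zw_b^-$ with the edges $v_nu_i$ and $v_1w_b$, with the path edges, and with the pinned-down neighbourhood of the intermediate vertices (by Claim~\ref{special}(i), $u_i^+$ is joined to $u_s,\dots,u_i$ and to the core vertices $u_i^{+2},u_{i+1}^{+2},\dots$, and $u_m^+\sim z$ for $s\le m\le i$), one aims to close up a Hamilton cycle in $G$ and contradict its non-hamiltonicity. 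This is exactly the crossing/rotation mechanism already used to produce \eqref{3.2}, \eqref{3.3} and the Hamilton cycles listed in Table~\ref{t1}.

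The main obstacle is precisely this closing step. In every naive re-routing the shifted vertex $u_i^+$, trapped between $u_i$ and $z$ and adjacent only to $\{u_s,\dots,u_i\}$ and to core vertices, is the one vertex left uncovered, so the construction yields a Hamilton path with endpoints $v_1,v_n$ rather than a cycle; moreover the extremal choice of $P$ gives no leverage here, since all the vertices involved ($v_1$, $v_n$, $u_i^+$, the relevant $w_b^-$) already have the extremal degree $s+t-2$. Overcoming it requires re-inserting $u_i^+$ through one of its edges $u_i^+u_m$ ($s\le m\le i$) together with a companion edge $u_m^+z$ supplied by Claim~\ref{special}(i). I expect that making this precise — most likely by organising the admissible edges into a short case table in the style of Table~\ref{t1}, each producing an explicit Hamilton cycle — is where the real work lies. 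Once (i) is established, (ii) follows by the reversal symmetry described above.
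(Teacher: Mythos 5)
Your preliminaries are correct and coincide with the paper's: the lower bound $\left|u_i^+\overrightarrow{P}u_{i+1}^+\right|\ge 3$ from Claim~\ref{inden}, the exclusion of gaps of length $\ge 5$ by applying \eqref{epp} to $u_i^{+3}$, the count $d_{A_2^+}(u_i^{+2})=i-s+1$ yielding an edge from $u_i^{+2}$ to $S_1$, and the derivation of (ii) from (i) by reversal. But you never carry out the one step that constitutes the claim's actual difficulty: excluding gaps of length exactly $4$. You say explicitly that closing the Hamilton cycle ``is where the real work lies'' and stop there, so the proposal is an unfinished plan, not a proof. Moreover, the missing step is not a short Table~\ref{t1}-style enumeration. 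The paper does not, in general, close a cycle from a single $S_1$-edge at all: for gap positions $i\ge 2s-1$ (in particular whenever $t\ge 2s$) it uses a counting contradiction --- a length-$4$ gap makes $u_{i+1}$ nonadjacent to every vertex of $F=\{u_s^+,\dots,u_i^+\}$, so $d_X(u_{i+1})\le |X|-|F|\le t-1$, contradicting \eqref{epp} --- while for $t\le 2s-1$ it first proves the propagation statement \eqref{ey2} (a length-$4$ gap at position $i$ forces one at position $i-1$), slides the bad gap down to position $t-1$, and only there builds explicit Hamilton cycles, which require Claim~\ref{middle}, the edge supplied by $E\left(S_1, A_2^+\cup\{u_t\}\right)\neq\emptyset$, and a further split into the sub-cases $s\le t-2$ and $s=t-1$.

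The gap in your argument is also not mere bookkeeping: your closing mechanism fails outright at the boundary of the claimed range. For $i\le \ell-2$ your plan can in fact be completed: taking $w_b^-\in S_1\cap N_G(u_i^{+2})$, the cycle $v_1\overrightarrow{P}w_b^-\,u_i^{+2}\,u_i^+\,u_{i+1}^{+2}\overrightarrow{P}v_n\,u_{i+1}\,u_{i+1}^+\,u_i\overleftarrow{P}w_b\,v_1$ is Hamiltonian, using the Claim~\ref{special}(i) edges $u_i^+u_{i+1}^{+2}$ and $u_{i+1}^+u_i$. But at $i=\ell-1$ (which the claim must cover) the re-entry vertex $u_{i+1}^{+2}$ does not exist, and $N_P(u_{\ell-1}^+)\cap[v_k,v_n]=\{u_s,\dots,u_{\ell-1},u_{\ell-1}^{+2}\}$ is so restricted that the combinations you propose --- edge $u_{\ell-1}^+u_m$ with companion $u_m^+u_{\ell-1}^{+2}$ --- always strand either $u_{\ell-1}^+$ or the tail $\{v_{n-1},v_n\}$; you yourself observed this stranding phenomenon but did not resolve it. This boundary case is exactly where the paper's extra machinery is indispensable: when $s\le t-2$ one has $\ell-1\ge 2s-1$ and the counting argument applies, and when $s=t-1$ it is the propagation \eqref{ey2} together with Claim~\ref{middle}(i) (namely $u_s=v_k=w_s$) that finishes the job. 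Without one of these, or a genuinely new closing argument, the claim is not proved.
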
 
		\begin{proof}[Proof of Claim~\ref{right}] 
        (i) By Claim~\ref{inden}, 
        $|u_{i}^{+}\overrightarrow{P}u_{i+1}^+|\ge 3$ for each integer $t-1\le i\le \ell-1$. 
Then we assert that   $|u_{i}^{+}\overrightarrow{P}u_{i+1}^+|\in\{3,4\},$~~for $t-1\le i\le \ell -1$.  
             Otherwise, suppose that  there exists an integer $t-1\le i\le \ell-1$ such that  $|u_{i}^{+}\overrightarrow{P}u_{i+1}^+|\ge 5.$  Clearly, $u_{i}^{+3}\not\in X.$
             By Claim~\ref{special}(i), $u_{i}^{+3}$ is non-adjacent to any vertex of $A_2^+.$ Since $|A_2^+|=t-1$, we have $d_X(u_i^{+3})\le s\le t-1,$
            a contradiction to \eqref{epp}.
        
If $t\ge 2s$, then we suppose to the contrary that there exists an integer $t-1\le j\le \ell-1$ such that $|u_{j}^{+}\overrightarrow{P}u_{j+1}^+|=4$. 
			 Clearly, $u_{j+1}\notin X.$
             By Claim~\ref{special}(i), we have $u_{j+1}$ is non-adjacent to any vertex of $F:=\{u_s^+,\dots,u_{j}^+\}.$ 
             Since $|F|=j-s+1\ge t-1-s+1\ge  s$,
            we have $d_X(u_{j+1})\le t-1,$
            a contradiction to \eqref{epp}.
        
        Next we assume that $t\le 2s-1$. 
We show that 
             \begin{align}\label{ey3}
                 \text{ 
                 $|u_{i}^{+}\overrightarrow{P}u_{i+1}^+|=3,$~~for $t-1\le i\le 2s-2$.  
                 }
             \end{align} Suppose to the contrary that there exists an integer $t-1\le j\le 2s-2$ such that $|u_{j}^{+}\overrightarrow{P}u_{j+1}^+|=4$. 
             
             We show that $|u_{t-1}^+\overrightarrow{P}u_t^+|=4.$  If $t=2s-1,$ then $j=t-1$ and $|u_{t-1}^{+}\overrightarrow{P}u_{t}^+|= 4$.  If $t\le 2s-2,$ then we will see that 
             \begin{align}\label{ey2}
                 \text{
             if $|u_{i}^{+}\overrightarrow{P}u_{i+1}^+|= 4$ 
             for some integer $t\le i\le 2s-2$,  then  $|u_{i-1}^{+}\overrightarrow{P}u_{i}^+|=4$. 
             }
             \end{align}
             Otherwise, suppose $|u_{i-1}^{+}\overrightarrow{P}u_{i}^+|=3$. Note that $u_{i+1}\notin X.$ By \eqref{epp}, $d_X(u_{i+1})\ge t$.
             Hence, there exist a vertex $w_j^-\in S_1\cap N_G(u_{i+1})$.  
             By Claim~\ref{special}(i), we have 
             $u_{i-1}^+\sim u_i^{+2}.$ Then  
             $G$ contains a Hamilton cycle $$v_1\overrightarrow{P}w_j^-u_{i+1}\overrightarrow{P}v_nu_i \overrightarrow{P} u_{i}^{+2}u_{i-1}^+\overleftarrow{P}w_jv_1,$$ a contradiction. 
             This proves \eqref{ey2}. 
			 Thus, by (\ref{ey2}), we have $|u_{t-1}^{+}\overrightarrow{P}u_{t}^+|= 4$.

             Since $G$ contains no $(s, t)$-bipartite-hole, we have  $E\left(S_1, A_2^+ \cup\{u_{t}\} \right)\neq \emptyset.$ Then  there exists a vertex $w_f^-\in S_1$ such that $w_f^-\sim u_t$. 
             If $s\le t-2$, then 
			 by Claim~\ref{middle}(ii), we have  $u_{t-2}^{+2}=u_{t-1}$. By Claim~\ref{special}(i), we have 
             $u_{t-2}^+\sim u_{t-1}^{+2}.$
			 Then $G$ contains a Hamilton cycle   $$v_1\overrightarrow{P}w_f^-u_{t}\overrightarrow{P}v_nu_{t-1}u_{t-1}^+u_{t-1}^{+2}u_{t-2}^+\overleftarrow{P}w_fv_1,$$ a contradiction.  
             Next we assume that  $s=t-1.$  By Claim~\ref{special}(i),  $u_{s}^{+2}$ is non-adjacent to any vertex of $A_2^+\setminus\{u_s^+\}.$ 
             Suppose that there exists a vertex $w_q^-\in S_1$, such that 
             $w_q^-\not\sim u_s^{+2}.$ 
             Then 
             $$E\left(\{w_q^-\}\cup \left( A_2^+\setminus\{u_s^+\} \right), \left( S_1\setminus\{w_q^-\} \right) \cup\{u_s^+,u_s^{+2}\} \right)=\emptyset.$$ 
             Hence, there exists an $(s,t)$-bipartite-hole in $G$, a contradiction. 
             This implies that  $S_1\subseteq N_G(u_{s}^{+2})$. Thus  $w_{s}^-\sim u_{s}^{+2}.$  Since $E\left(S_1, A_2^+ \cup\{u_{s+1}\} \right)\neq \emptyset,$ there exists a vertex $w_p^-\in S_1$ such that $w_p^-\sim u_{s+1}$. 
             By Claim~\ref{middle}(i), $u_s=w_s.$
             Then $G$ contains a Hamilton cycle $$v_1\overrightarrow{P}w_p^-u_{s+1}\overrightarrow{P}v_nu_{s}\overrightarrow{P} u_{s}^{+2}w_{s}^-\overleftarrow{P}w_pv_1,$$ a contradiction.  This proves \eqref{ey3}.  

             We now return to the proof of Claim~\ref{right}. 
             If $s=t-1$, that is, $\ell-1=2s-2$, then the desired  result  follows from  \eqref{ey3}. 
             Next we 
             assume that $s\le t-2.$ By \eqref{ey3}, it suffices to prove  that $|u_{i}^{+}\overrightarrow{P}u_{i+1}^+|=3$ for $2s-1\le i\le \ell-1$. Otherwise, there exists an integer  $2s-1\le j\le \ell-1$ such that $|u_{j}^{+}\overrightarrow{P}u_{j+1}^+|\ge 4$. Clearly, $u_{j+1}\notin X.$
			 By Claim~\ref{special}(i), we have $u_{j+1}$ is non-adjacent to any vertex of $F:=\{u_s^+,\dots,u_{j}^+\}.$ Since $|F|=j-s+1\ge s$,  we have $d_X(u_{j+1})\le t-1,$ a contradiction to \eqref{epp}.
             This proves (i).  
             
             (ii) It follows from (i) and the symmetry of $v_1$ and $v_n$. 
             This completes the proof of Claim~\ref{right}. 
		\end{proof}

		By Claims~\ref{middle} and \ref{right}, we have $|P|=2\ell+1$. 
        By \eqref{epq}, $X$ is an independent set of cardinality $\ell+1.$ Together with the condition that $\sigma_2(G)\ge 2\ell$, we have that each vertex of $X$ is adjacent to each vertex of $V(G)\setminus X.$ 
        Hence  $G=G_\ell\vee (\ell+1 )K_1,$ where $G_\ell$ is an arbitrary graph of order $\ell,$ as desired. 
		
		\vskip 3mm
		\noindent{\bf Case 2.} $s=t$. 
		\vskip 3mm
Since $s=t\ge 2$ and  $G$ is non-hamiltonian,  
			we have $k'<k$ and $k'\ne k-1$. 
			Thus $k'\le k-2$. By \eqref{eq4}, \eqref{34} and $|N(v_1)\cap [v_1,v_k]|=s,$  we have  
            \begin{align*}
                \text{
                $N(v_1)\cap [v_{k'+1},v_{k-1}]=\emptyset$~~and~~$N(v_n)\cap [v_{k'+1},v_{k-1}]=\emptyset.$
                }
            \end{align*} 
        Since $G$ is non-hamiltonian, we have 
			$$E \left( (N_P(v_1)\cap [v_1,v_k])^-,(N_P(v_{k'+1})\cap [v_{k+1},v_n])^- \right)=\emptyset,$$
			and 
			$$E \left( (N_P(v_n)\cap [v_{k'},v_n])^+,(N_P(v_{k-1})\cap [v_1,v_{k'-1}])^+ \right)=\emptyset.$$ 
			By \eqref{3.2}, \eqref{3.2'} and the fact that $G$ contains no $(s, t)$-bipartite-hole, we have 
            \begin{align}\label{ey4}
                (N_P(v_{k'+1})\cap [v_{k+1},v_n])^-\subseteq A_2^+ \text{~~and~~}
			(N_P(v_{k-1})\cap [v_{1},v_{k'-1}])^+\subseteq B_1^-.
            \end{align} 
			We assert that 
             \begin{align}\label{ey5}
                \text{
            $N_G(v_{k-1})\cap B_1^-=\emptyset$~~and~~$N_G(v_{k'+1})\cap A_2^+=\emptyset$. 
            }
            \end{align} 
            Suppose to the contrary that  there exists  a vertex  $x\in N_G(v_{k-1})\cap B_1^-.$ 
			By \eqref{ey4}, we have  $x^+\in B_1^-,$ which contradicts Claim~\ref{inden}. 
            This implies that $N_G(v_{k-1})\cap B_1^-=\emptyset$. 
            Similar reasoning shows that 
            $N_G(v_{k'+1})\cap A_2^+=\emptyset.$ Thus \eqref{ey5} holds. 
		\begin{claim}\label{k'}
			$k'=k-2.$
		\end{claim}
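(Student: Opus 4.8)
The plan is to argue by contradiction: assuming $k'\le k-3$, I will produce an $(s,s)$-bipartite-hole, which in Case~2 (where $s=t$, so $s+s=2s=\widetilde{\alpha}(G)+1$) contradicts the standing hypothesis that $G$ has no $(s,t)$-bipartite-hole. The objects I would use are the two independent sets $A_2^+$ and $B_1^-$ (independent by Claim~\ref{inden}), each of size $t-1=s-1$, together with the two ``middle'' vertices $v_{k'+1}$ and $v_{k-1}$, which are \emph{distinct} precisely because $k'\le k-3$, and which satisfy $A_2^+\subseteq[v_{k+1},v_n]$, $B_1^-\subseteq[v_1,v_{k'-1}]$, so that $\{v_{k'+1},v_{k-1}\}$ is disjoint from $A_2^+\cup B_1^-$.

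First I would collect the non-adjacencies I need. From \eqref{ey5} I already have $v_{k'+1}\not\sim A_2^+$ and $v_{k-1}\not\sim B_1^-$. The three remaining facts, namely $E(A_2^+,B_1^-)=\emptyset$, $v_{k'+1}\not\sim B_1^-$, and $v_{k-1}\not\sim A_2^+$, I would each derive from the non-hamiltonicity of $G$ by a short rerouting argument. Indeed, if $u_i^+\sim w_j^-$ for some $u_i\in A_2$ and $w_j\in B_1$, then, using $v_1\sim w_j$, $v_n\sim u_i$, and $w_j\le v_{k'}<v_k\le u_i$, the closed walk $v_1\overrightarrow{P}w_j^-u_i^+\overrightarrow{P}v_nu_i\overleftarrow{P}w_jv_1$ is a Hamilton cycle, a contradiction; hence $E(A_2^+,B_1^-)=\emptyset$. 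Likewise, $v_{k'+1}\sim w_j^-$ (necessarily with $w_j\ne v_2$, since $v_{k'+1}\not\sim v_1$) yields the Hamilton cycle $v_1w_j\overrightarrow{P}v_{k'}v_n\overleftarrow{P}v_{k'+1}w_j^-\overleftarrow{P}v_1$ via $v_{k'}\sim v_n$, and $v_{k-1}\sim u_i^+$ yields the mirror-image Hamilton cycle $v_nu_i\overleftarrow{P}v_kv_1\overrightarrow{P}v_{k-1}u_i^+\overrightarrow{P}v_n$ via $v_k\sim v_1$; both are impossible.

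With these in hand, $X:=A_2^+\cup B_1^-$ is an independent set of size $2(s-1)=2s-2$, disjoint from $\{v_{k'+1},v_{k-1}\}$, and both $v_{k'+1}$ and $v_{k-1}$ are non-adjacent to every vertex of $X$. Since $s=t\ge 2$, I can split $X$ into $X_1\sqcup X_2$ with $|X_1|=s-2$ and $|X_2|=s$, and set $S:=\{v_{k'+1},v_{k-1}\}\cup X_1$ and $T:=X_2$. Then $|S|=|T|=s$, and $E(S,T)=\emptyset$ because there are no edges inside $X$ and none from $v_{k'+1}$ or $v_{k-1}$ to $X$. Thus $(S,T)$ is an $(s,s)$-bipartite-hole, the desired contradiction; together with the already established $k'\le k-2$ this forces $k'=k-2$.

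The main obstacle, and the point where a naive attempt breaks down, is the temptation to place $v_{k'+1}$ and $v_{k-1}$ on \emph{opposite} sides of the bipartite-hole, which would require $v_{k'+1}\not\sim v_{k-1}$; this non-adjacency need not hold (when $k'=k-3$ the two vertices are consecutive on $P$ and hence adjacent, and in general a chord could join them). The resolution is to keep both middle vertices on the \emph{same} side: the only edge that could possibly occur among $\{v_{k'+1},v_{k-1}\}\cup X$ is $v_{k'+1}v_{k-1}$ itself, and confining it to a single part renders it harmless, while the count $|X|=2s-2$ is exactly enough to balance the two parts at size $s$.
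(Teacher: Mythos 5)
Your proposal is correct and takes essentially the same route as the paper: assuming $k'\le k-3$, you establish the same non-adjacency facts (the paper's \eqref{ey5} together with $v_{k'+1}\not\sim B_1^-$ and $v_{k-1}\not\sim A_2^+$, which the paper proves as \eqref{ey7} via the same rerouting cycles), and then form an $(s,s)$-bipartite-hole with $v_{k'+1}$ and $v_{k-1}$ deliberately placed on the same side — exactly the paper's trick of taking $\left(B_1^-\setminus\{x_0\}\right)\cup\{v_{k'+1},v_{k-1}\}$ against $A_2^+\cup\{x_0\}$. The only cosmetic differences are that you re-derive $E(A_2^+,B_1^-)=\emptyset$ by a direct Hamilton-cycle rerouting instead of citing \eqref{3.2}, and you balance the two sides by an arbitrary split of $A_2^+\cup B_1^-$ into parts of sizes $s-2$ and $s$ rather than by moving the single vertex $x_0$ across.
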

		\begin{proof}[Proof of Claim~\ref{k'}] 
			To the contrary, suppose that $k'\le k-3.$ 
            Then $v_{k'+1}\neq v_{k-1}.$ 
			We show that 
            \begin{align}\label{ey7}
                \text{
            $N_G(v_{k'+1})\cap B_1^-=\emptyset$~~and~~ 
            $N_G(v_{k-1})\cap A_2^+=\emptyset.$ 
            }
            \end{align} 
            In fact, for any vertex $x\in B_1^-$ there exists a Hamilton $(v_{k'+1},x)$-path  $v_{k'+1}\overrightarrow{P}v_nv_{k'}\overleftarrow{P}x^+v_1\overrightarrow{P}x,$ 
            we have $x\not\sim v_{k'+1}$ and thus $N_G(v_{k'+1})\cap B_1^-=\emptyset$. Similar reasoning shows that $N_G(v_{k-1})\cap A_2^+=\emptyset.$ Thus \eqref{ey7} holds. 
			Choose a vertex $x_0\in B_1^-$. Combining with Claim~\ref{inden}, \eqref{3.2}, \eqref{ey5} and \eqref{ey7}, we have  
			$$E\left( (B_1^-\setminus\{x_0\})\cup\{v_{k'+1},v_{k-1}\},A_2^+\cup \{x_0\} \right)=\emptyset,$$
			which implies there exists an $(s,s)$-bipartite-hole in $G$, a contradiction. 
            This proves Claim~\ref{k'}. 
		\end{proof} 
Combining with \eqref{3.2}, \eqref{ey5}, Claims~\ref{inden} and \ref{k'}, 
            we have that 
            \begin{align}\label{ep6}
                \text{
                $v_{k'+1}=w_s^-=u_{s-1}^+,$ and  
            $Y:=B_1^-\cup \{v_{k'+1}\}\cup A_2^+$ is an  independent set of $G$.
		}
            \end{align}   
     Together with the fact that  $G$ contains no $(s,s)$-bipartite-hole, we have 
    \begin{align}\label{ep7}
                \text{
            $d_Y(z)\ge s,$ for each vertex $z\in V(G)\setminus Y$.
		}
            \end{align}  
            
We first consider the case that  $s=t=2$. Suppose that $k'\ge 3.$ Since $v_{k'-1}\overleftarrow{P}v_1v_k\overrightarrow{P}v_nv_{k'}v_{k'+1}$ is a Hamilton $(v_{k'-1}, v_{k'+1})$-path, we have  $v_{k'-1}\not\sim v_{k'+1}$. 
        By Claim~\ref{c0} and \eqref{eq4}, we have $d_G(v_1)=d_G(v_n)=2.$ Thus $N_G(v_1)=\{v_2,v_k\}$ and 
        $N_G(v_n)=\{v_{k'},v_{n-1}\}.$
        Then $v_1 \not\sim v_{k'+1}$ and $v_n\not\sim v_{k'-1}$. 
		Together with \eqref{ep6}, we have 
        $$E\left(\{v_1,v_{k'-1}\},\{v_{k'+1},v_n\}\right)=\emptyset,$$ 
        which implies that  $G$ contains a $(2,2)$-bipartite-hole, a contradiction. 
        This implies that $k'=2.$ 
        Similar reasoning shows that $k=n-1.$ 
 Then $n=5$. Combining with \eqref{ep6}, we have $G= G_2\vee 3K_1$, where $G_2$ is an arbitrary graph of order two, as desired.  
 
 Next we assume that $s=t\ge 3.$	         
        By \eqref{ep6}, we have  $v_{k'+1}\not\sim v_n.$
Then $$d_G(v_{k'+1})+d_G(v_n)\ge 2\widetilde{\alpha}(G)-2=4(s -1).$$ Note that there exists a Hamilton $(v_1,v_{k'+1})$-path 
$v_1 \overrightarrow{P}v_{k'} v_n \overleftarrow{P} v_{k'+1}.$   
By the choice of $P$, we have $d_G(v_{k'+1})\le d_G(v_n).$ Together with \eqref{eq4}, we have  
\begin{align*}
d_G(v_{k'+1})=2s-2.
\end{align*}
Combining this with \eqref{ey4}, we have 
            \begin{align}
                N_P(v_{k'+1})\cap [v_{k+1},v_n]&= (A_2\setminus\{u_{\ell}\})^{+2}=\{u_s^{+2}, \ldots, u_{\ell-1}^{+2}\} 
            \text{~~and~~}\notag\\
                N_P(v_{k'+1})\cap [v_{1},v_{k'-1}]&= (B_1\setminus\{w_1\})^{-2}=\{w_2^{-2}, \ldots,w_{t-1}^{-2}\}.\label{l}
            \end{align}

		\begin{claim}\label{1}
			\begin{wst}
				\item[{\rm (i)}]
				$|w_i^- \overrightarrow{P} w_{i+1}^-|=3,$ for $1\le i\le s-1$; 
				\item[{\rm (ii)}]    
				$|u_j^+ \overrightarrow{P} u_{j+1}^+|=3,$ for $s-1\le j\le \ell-1$. 
			\end{wst}
		\end{claim}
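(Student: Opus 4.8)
The plan is to prove (ii); part (i) then follows by the symmetry $v_i\mapsto v_{n+1-i}$, which (since $s=t$) interchanges $v_1$ with $v_n$, the sequence $u_s,\ldots,u_\ell$ with the reversed sequence of $w$'s, and the independent sets $A_2^+$ and $B_1^-$. The lower bound $|u_j^+\overrightarrow{P}u_{j+1}^+|\ge 3$ for every $s-1\le j\le \ell-1$ is immediate from \eqref{ep6}: since $A_2^+\cup\{v_{k'+1}\}\subseteq Y$ is independent and $v_{k'+1}=u_{s-1}^+$, two consecutive neighbors $u_j,u_{j+1}$ of $v_n$ cannot be consecutive along $P$, for otherwise their successors would be adjacent vertices of $Y$. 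Hence the entire content of (ii) is the matching upper bound, i.e.\ ruling out any gap with $|u_j^+\overrightarrow{P}u_{j+1}^+|\ge 4$.

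The counting engine is \eqref{ep7}: because $|Y|=2s-1$ and $d_Y(z)\ge s$ for every $z\notin Y$, each such $z$ can be non-adjacent to at most $s-1$ vertices of $Y$. First I would dispose of gaps of length at least $5$. If $|u_j^+\overrightarrow{P}u_{j+1}^+|\ge 5$, then $z:=u_j^{+3}$ lies strictly between the consecutive $v_n$-neighbors $u_j$ and $u_{j+1}$, so $z\notin Y$. Reading the neighborhoods of $u_s^+,\ldots,u_\ell^+$ off Claim~\ref{special}(i) and the neighborhood of $v_{k'+1}$ off \eqref{l}, the position of $z$ is neither a neighbor-position nor a (neighbor $+2$)-position; hence $z$ is adjacent to none of $A_2^+\cup\{v_{k'+1}\}$, a subset of $Y$ of size $(s-1)+1=s$. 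This forces $d_Y(z)\le s-1$, contradicting \eqref{ep7}.

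It remains to exclude gaps of length exactly $4$, and this is where the argument becomes delicate. Here the naive count is too weak: for $z:=u_j^{+2}$ the non-neighbors supplied by Claim~\ref{special}(i) form only the set $\{u_{j+1}^+,\ldots,u_\ell^+\}$, of size $\ell-j\le s-1$ on the range $j\ge s-1$, while $v_{k'+1}$ turns out to be a neighbor of $z$ by \eqref{l}. So instead I would use \eqref{ep7} to force $z=u_j^{+2}$ to have a neighbor in $B_1^-$, and then build a Hamilton cycle through the gap exactly as in the Hamilton-cycle constructions in the proof of Claim~\ref{right}: combine such an edge with the chord $v_nu_{j+1}$ and the chords furnished by Claim~\ref{special}(i) to route $v_1\overrightarrow{P}\cdots$ through the right part and back, contradicting the non-hamiltonicity of $G$. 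The boundary index $j=s-1$, where $u_{s-1}^+=v_{k'+1}=v_{k-1}$ (by \eqref{ep6} and Claim~\ref{k'}) and $z=u_{s-1}^{+2}=v_k$, must be handled separately, using the neighborhood of $v_{k'+1}$ from \eqref{l} together with the non-hamiltonicity of $G$.

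The hard part, as just indicated, is the gap-exactly-$4$ case. Because $s=t$, the independent set $Y$ has size $2s-1$ and the degree threshold is the exact half $s$, so the non-adjacency count has no slack and one cannot avoid explicit rotation/Hamilton-cycle arguments in place of a clean count. Additional care is needed in bookkeeping which of $A_2^+$, $\{v_{k'+1}\}$, and $B_1^-$ actually meet $N_G(z)$, since $v_{k'+1}$ is governed by the shifted neighborhoods of \eqref{l} rather than by Claim~\ref{special}, and in treating the two boundary indices $j=s-1$ and $j=\ell-1$.
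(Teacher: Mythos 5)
Your treatment of the two easy regimes is sound and matches the paper: the lower bound $\ge 3$ follows from the independence of $Y$ in \eqref{ep6}, and gaps of length at least $5$ are excluded by locating a vertex (your $u_j^{+3}$; the paper's symmetric choice is $w_i^+$) which Claim~\ref{special} and \eqref{l} show to be non-adjacent to all of $A_2^+\cup\{v_{k'+1}\}$ (resp.\ $B_1^-\cup\{v_{k'+1}\}$), contradicting \eqref{ep7}. Deducing (i) from (ii) by path reversal is also fine, mirroring how the paper deduces (ii) from (i).

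The genuine gap is exactly where you flag ``delicacy'': the exclusion of gaps of length exactly $4$. Your plan --- force an edge $w_f^-\sim u_j^{+2}$ via \eqref{ep7} and then build a Hamilton cycle ``as in Claim~\ref{right}'' --- does not go through. The constructions in Claim~\ref{right} hinge on an \emph{adjacent gap of length $3$}: the cycle $v_1\overrightarrow{P}w_j^-u_{i+1}\overrightarrow{P}v_nu_i\overrightarrow{P}u_i^{+2}u_{i-1}^+\overleftarrow{P}w_jv_1$ closes up only because $u_{i-1}^{+2}=u_i$, and the crossing edge it uses goes to the $v_n$-neighbour $u_{i+1}$, not to the mid-gap vertex. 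With your edge $w_f^-u_j^{+2}$, every natural routing (for instance $v_1\overrightarrow{P}w_f^-u_j^{+2}\overrightarrow{P}v_nu_j\overleftarrow{P}w_f\,v_1$ and its variants) strands the vertex $u_j^+$, whose only known neighbours are $\{u_s,\dots,u_j\}\cup\{u_j^{+2},\dots,u_{\ell-1}^{+2}\}$ together with unspecified vertices of $[v_1,v_{k'}]$; when the neighbouring gaps also have length $4$, no chord from Claim~\ref{special} repairs this. That worst configuration --- all gaps equal to $4$ --- is precisely what a proof must kill, and the paper does \emph{not} kill it with any cycle construction: it first proves the propagation statements \eqref{w_i} and \eqref{u_j} (a length-$4$ gap forces its neighbouring gaps to have length $4$), couples the left and right sides through the dichotomy $|u_{s-1}^+\overrightarrow{P}u_s^+|\in\{3,4\}$, and in the all-$4$ case reaches a contradiction by a global edge count rather than a cycle: $e(T,A_2^+)\ge\sum_{f=1}^{s-1}\bigl((s-f)+f\bigr)=s(s-1)$ from \eqref{ep7}, against the exact capacity $e(T,A_2^+)=(s-1)(s-1)$ supplied by Claim~\ref{special}(i) and \eqref{ep6}. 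Your sketch contains neither the propagation step nor any counting substitute for it, so the central case of the claim remains unproved.
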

		\begin{proof}[Proof of Claim \ref{1}]
             We first prove that 
             \begin{align}\label{ep2}
                \text{
             $|w_i^- \overrightarrow{P} w_{i+1}^-|\le 4,$~~for $1\le i\le s-1.$ 
             }
        \end{align}
             Otherwise, 
             suppose to the contrary that there exists an integer $1\le i\le s-1$ such that $|w_i^- \overrightarrow{P} w_{i+1}^-|\ge 5$. Clearly, $w_i^+\notin Y.$
             By Claim~\ref{special}(ii), $w_i^+$ is non-adjacent to any vertex of $B_1^-$. 
             By \eqref{l}, $w_i^+\not\in  N_P(v_{k'+1}).$ Then $d_Y(w_i^+)\le s-1,$ a contradiction to \eqref{ep7}.
            This implies that 
            $|w_i^- \overrightarrow{P} w_{i+1}^-|\le 4$ for $1\le i\le s-1.$ Similar reasoning shows that 
            \begin{align}\label{ee1}
                \text{
                $|u_j^+ \overrightarrow{P} u_{j+1}^+|\le 4,$~~for $s-1\le j\le \ell-1$. 
                }
            \end{align} 
            We assert that 
            \begin{align}\label{w_i}
                \text{if $|w_i^- \overrightarrow{P} w_{i+1}^-|= 4$ for some integer $1\le i\le s-2$, then $|w_{i+1}^- \overrightarrow{P} w_{i+2}^-|= 4$.}
            \end{align} 
            Otherwise, suppose  $|w_{i+1}^- \overrightarrow{P} w_{i+2}^-|\neq 4.$ 
            Then by \eqref{ep6}, we have $|w_{i+1}^- \overrightarrow{P} w_{i+2}^-|= 3.$ 
            By Claim~\ref{special}(ii), we have  $w_{i+2}^{-}\sim w_{i+1}^{-2}.$ By \eqref{l}, $w_i\not\sim v_{k'+1}$. Note that $w_i\notin Y$. By \eqref{ep7}, we have $d_Y(w_i)\ge s$. Hence, there exists a vertex $u_p\in  A_2$ such that $w_i\sim u_p^+$. Then  $G$ contains a Hamilton cycle $$v_1w_{i+1}\overleftarrow{P}w_{i+1}^{-2}w_{i+2}^{-}\overrightarrow{P}u_pv_n\overleftarrow{P}u_p^+w_i\overleftarrow{P}v_1,$$ a contradiction. This proves \eqref{w_i}. 
            Similar reasoning shows that 
            \begin{align}\label{u_j}
                \text{if $|u_j^+ \overrightarrow{P} u_{j+1}^+|= 4$ for some $s\le j\le \ell-1$, then $|u_{j-1}^+ \overrightarrow{P} u_{j}^+|= 4.$ 
                }
            \end{align} 
            
We are now prepared to prove (i). 
                
            (i) 
            By \eqref{ep6} and \eqref{ep2}, we have 
        $ |w_i^- \overrightarrow{P} w_{i+1}^-|\in\{3, 4\}$  for $1\le i\le s-1.$  
            To the contrary, suppose that there exists an integer $1\le q\le s-1$
            such that 
            $|w_q^- \overrightarrow{P} w_{q+1}^-|= 4$. By \eqref{w_i}, we have $|w_{s-1}^- \overrightarrow{P} w_{s}^-|= 4$.
            
            We first assume that  $|u_{s-1}^+\overrightarrow{P}u_s^+|=3$. By \eqref{ep6}, \eqref{ee1} and  \eqref{u_j},  we have 
            \begin{align}\label{ew1}
                \text{
            $|u_j^+ \overrightarrow{P} u_{j+1}^+|=3$ for $s-1\le j\le \ell-1$, 
            and hence $|v_k\overrightarrow{P}v_n|=2s-2$.
            }
            \end{align}
            By \eqref{l}, we have $w_{s-1}\not\sim v_{k'+1}$. Note that $w_{s-1}\notin Y$. By \eqref{ep7}, we have $d_Y(w_{s-1})\ge s.$ 
            This implies that there exists a vertex $u_j\in A_2$ such that $w_{s-1}\sim u_j^+$.  
            By Claim~\ref{special}(ii), we have $d_G(w_{s-1}^-)=2s-2$ and $|N_G(w_{s-1}^-)\cap[v_1,v_{k'}]|=s-1$.  
           Then $|N_G(w_{s-1}^-)\cap[v_k,v_n]|=s-1$.  
            Combining this with \eqref{ew1} and \eqref{ep6}, $w_{s-1}^-$ is adjacent to every vertex of $V(v_k\overrightarrow{P}v_n)\setminus A_2^+$. Hence, $u_j\sim w_{s-1}^-.$ By \eqref{ep6} and $|w_{s-1}^- \overrightarrow{P} w_{s}^-|= 4$, we have $w_{s-1}=v_{k'-1}$. Then $G$ contains a Hamilton cycle 
            $$v_1\overrightarrow{P}w_{s-1}^-u_j\overleftarrow{P}v_{k'}v_n\overleftarrow{P}u_j^+w_{s-1}v_1,$$
			a contradiction.
            
            Next we assume that $|u_{s-1}^+\overrightarrow{P}u_s^+|=4$. Then by Claim~\ref{k'}, $u_s\neq  v_k.$  We show that 
            \begin{align}\label{ea1}
                \text{
            $|w_i^- \overrightarrow{P} w_{i+1}^-|= 4,$ for each integer $1\le i\le s-1$.
            }
            \end{align} 
            Recall that there exists an integer $1\le q\le s-1$
            such that 
            $|w_q^- \overrightarrow{P} w_{q+1}^-|= 4$. 
            If $q=1$, then \eqref{ea1} follows from \eqref{w_i}. Next we assume that $q\ge 2$.  We assert  that   $|w_{q-1}^- \overrightarrow{P} w_{q}^-|= 4$. 
                 Suppose to the contrary that $|w_{q-1}^- \overrightarrow{P} w_{q}^-|= 3.$ 
            Note that $u_s\neq  v_k.$ From Claim~\ref{special}(i),  $v_k$ is non-adjacent to any vertex of $A_2^+.$ 
            Together with \eqref{ep7}, we have 
             $B_1^-\subseteq N_G(v_k)$.  
            Then $v_k\sim w_{q-1}^-.$ 
            By  $|w_{q-1}^- \overrightarrow{P} w_{q}^-|= 3$  and \eqref{l}, 
            we have $v_{k'+1}\sim w_q^{-2}=w_{q-1}.$ By \eqref{ep6} and $|w_{s-1}^- \overrightarrow{P} w_{s}^-|= 4$, we have $w_{s-1}=v_{k'-1}$. Thus  $G$ contains a Hamilton cycle
            $$v_1\overrightarrow{P}w_{q-1}^-v_k\overrightarrow{P}v_nv_{k'}v_{k'+1}w_{q-1}\overrightarrow{P}w_{s-1}v_1,$$
            a contradiction. 
            This implies that  
            $|w_{q-1}^- \overrightarrow{P} w_{q}^-|= 4$. 
            Now repeating the above argument successively finitely many times and combining with \eqref{w_i}, we proves 
            \eqref{ea1}. 
            
            For each $1\le f\le s-1,$ 
            by \eqref{ea1}, \eqref{ep6} and \eqref{l}, we have $v_{k'+1}=w_s^-\not\sim w_f.$ 
            Combining this with Claim~\ref{special}(ii), 
             $w_f$ is non-adjacent to any vertex of $\{w_{f+1}^-,\dots,w_s^-\}$. By \eqref{ep7}, we have $d_{A_2^+}(w_{f})\ge  s-f$. 
            Similarly, 
            by \eqref{ea1}, $w_1^-=v_1\not\sim w_{f+1}^{-2}.$
            Combining this with Claim~\ref{special}(ii), we obtain that 
             $w_{f+1}^{-2}$ is non-adjacent to any vertex of $\{w_{1}^-,\dots,w_{f}^-\}$. 
            Then by \eqref{ep7}, $d_{A_2^+}(w_{f+1}^{-2})\ge f$.  
            Let $T=V(v_1\overrightarrow{P}v_k')\setminus B_1^-.$ 
             Then  $$e(T,A_2^+)\ge (s-1)(s-f+f)=s(s-1).$$ In the other hand, by Claim~\ref{special}(i) and \eqref{ep6}, for any vertex $u_a\in A_2$, we have $|N_G(u_a^+)\cap [v_1,v_{k'}]|=s-1$, and thus  $$e(T,A_2^+)= (s-1)(s-1),$$ a contradiction. 
            This proves (i).

            (ii) By the symmetry of $v_1$ and $v_n$, the result holds, and we omit the proof.
	\end{proof}
By Claims~\ref{k'} and \ref{1}, $|P|=2\ell +1.$ 
By \eqref{ep6}, we have  $Y$ is an  independent set of cardinality $\ell+1.$ Together with the condition that $\sigma_2(G)\ge 2\ell$, we obtain that  each vertex of $Y$ is adjacent to each vertex of $V(G)\setminus Y.$ 
        Hence  $G=G_\ell\vee (\ell+1 )K_1,$ where $G_\ell$ is an arbitrary graph of order $\ell.$ 
This completes the proof of Theorem~\ref{thmctOre}. 
\end{proof}
    \begin{proof}[\bf Proof of Theorem~\ref{thmct1}] 
    Let $G$ be a non-hamiltonian graph satisfying the conditions of Theorem~\ref{thmct1}. 
    By Lemma~\ref{lemZ}, $G$ is connected. 
    If $G$ is $2$-connected, then by Theorem~\ref{thmctOre}, we have $G= G_{ \frac{n-1}{2}}\vee \frac{n+1}{2} K_1$, as desired. Next we assume that $G$ contains a cut vertex $x$.  
    By  definition, there exist two 
		positive integers $s$ and $t$ such that $s+t=\widetilde{\alpha}(G) +1$ 
		and $G$ contains no $(s,t)$-bipartite-hole.  
		Without loss of generality, we assume that $1\le s\le t.$ 
		Since $G$ has no Hamilton cycle, $G\neq K_n$, and hence $t\ge 2$ and $\widetilde{\alpha}(G) \ge 2.$ 
    We let $H_1,\ldots,H_k$ be the components of $G-x$, where $k\ge 2$ and   $|H_1|\le  \cdots\le |H_k|.$ 
		If $|H_1|\le s-1$, then by $\delta(G)\le |H_1|$, we have 
		$$s+t-1= \widetilde{\alpha}(G)\le \delta(G)+1\le |H_1|+1\le s.$$
		Then $t\le 1,$ a contradiction. 
        This implies that 
        $|H_1|\ge s.$ 
		Since $G$ contains no $(s,t)$-bipartite-hole
        and 
        $E\left( V(H_1),  \cup_{i=2}^k V(H_i) \right) =\emptyset,$ 
        we have  
		$ n-1-|H_1|=\sum_{i=2}^k |H_i|\le t-1.$ 
		Then 
		\begin{align}\label{e1}
        n-1-|H_1|\le t-1\le s+t-2=\widetilde{\alpha}(G)-1\le \delta(G)\le |H_1|. 
		\end{align}
		Thus, $|H_1|\ge (n-1)/2$. 
		By  the fact that $|H_1|=\min\{|H_j|: 1\le j\le k\} $  and  $\sum_{j=1}^k |H_j|=n-1$,  
		we have $k=2$, $|H_1|=|H_2|=(n-1)/2,$ where $n$ is odd.  Then every inequality in \eqref{e1} becomes equality. Hence $\widetilde{\alpha}(G)=(n+1)/2.$  
		
		\begin{fact}\label{c1}
			$x$ is adjacent to all vertices of $V(G)\setminus  \{x\}.$
		\end{fact}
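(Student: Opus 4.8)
The plan is to exploit the rigidity of the degree sequence that the structure already established in \eqref{e1} imposes. Since we have shown that $G-x$ consists of exactly two components $H_1,H_2$, each of order $(n-1)/2$, with $n$ odd, and that $\widetilde{\alpha}(G)=(n+1)/2$, the hypothesis $\delta(G)\ge\widetilde{\alpha}(G)-1$ now reads simply $\delta(G)\ge (n-1)/2$. The whole statement should fall out of this single numerical threshold.

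First I would fix an arbitrary vertex $v\in V(H_i)$ for $i\in\{1,2\}$. Because there are no edges between $H_1$ and $H_2$, every neighbor of $v$ must lie in $V(H_i)\cup\{x\}$, so $N_G(v)\subseteq (V(H_i)\setminus\{v\})\cup\{x\}$. The key observation is that $|V(H_i)\setminus\{v\}|=(n-3)/2$, which is exactly one less than the minimum-degree threshold $(n-1)/2$. Next I would bound the degree accordingly: if $v\not\sim x$, then $d_G(v)\le (n-3)/2<(n-1)/2\le\delta(G)$, a contradiction. Hence $v\sim x$.

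Since $v$ was an arbitrary vertex of $H_1\cup H_2$, this forces $x$ to be adjacent to every vertex of $V(H_1)\cup V(H_2)=V(G)\setminus\{x\}$, which is precisely Fact~\ref{c1}. This is a direct degree count rather than a structural argument, so I do not anticipate any genuine obstacle. The only point requiring care is to use that the component sizes are exactly $(n-1)/2$, so that discarding the single potential edge from $v$ to $x$ drops the maximum available degree of $v$ strictly below $\delta(G)$; the equalities forced at the end of \eqref{e1} are exactly what guarantees this tight margin.
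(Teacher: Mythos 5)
Your proof is correct, but it takes a genuinely different route from the paper's. You argue by a direct degree count: once the equalities in \eqref{e1} force $|H_1|=|H_2|=(n-1)/2$ and $\delta(G)\ge\widetilde{\alpha}(G)-1=(n-1)/2$, any vertex $v\in V(H_i)$ with $v\not\sim x$ would have $N_G(v)\subseteq V(H_i)\setminus\{v\}$, so $d_G(v)\le (n-3)/2<(n-1)/2\le\delta(G)$, a contradiction. The paper instead argues through the bipartite-hole-number: if $x\not\sim v_1$ with $v_1\in V(H_1)$, then $E\left(v_1,V(H_2)\cup\{x\}\right)=\emptyset$ yields a $\left(1,\frac{n+1}{2}\right)$-bipartite-hole, and since one can also place $a$ vertices in $H_1$ and $b$ vertices in $H_2$ for every split $a+b=(n+3)/2$ with $a,b\le (n-1)/2$, the graph contains an $(a,b)$-bipartite-hole for every pair with $a+b=(n+3)/2$, forcing $\widetilde{\alpha}(G)\ge (n+3)/2$ and contradicting $\widetilde{\alpha}(G)=(n+1)/2$. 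Your argument is more elementary, touching only the minimum-degree hypothesis and the component sizes, and the identical count (a non-neighbor inside $H_i$ also caps the degree at $(n-3)/2$) would dispose of Fact~\ref{c2} as well. The paper's argument stays entirely inside the bipartite-hole framework and exploits the exact value of $\widetilde{\alpha}(G)$, which keeps Facts~\ref{c1} and~\ref{c2} uniform in style, at the cost of checking that every split of $(n+3)/2$ admits a hole. Both are airtight here.
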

		\begin{proof}[Proof of Fact~\ref{c1}]
			Without loss of generality, we suppose  to the contrary that $x\not\sim v_1$, where $v_1\in V(H_1).$ Since 
			$E(v_1,V(H_2)\cup\{x\})=\emptyset$, there exists a 
			$(1,\frac{n+1}{2})$-bipartite-hole in $G$. Combining with   $|H_1|=|H_2|=(n-1)/2$ gives that $G$ contains an  
			$(a,b)$-bipartite-hole for any pair of nonnegative integers $a,b$  with $a+b=(n+3)/2$, which contradicts the fact that  $\widetilde{\alpha}(G)=(n+1)/2.$
		\end{proof}
		
		\begin{fact}\label{c2}
			$H_i$ is a complete graph, for $i\in\{1,2\}.$
		\end{fact}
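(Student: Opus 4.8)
The plan is to exploit the extremal degree constraints that the preceding analysis has already forced. Since $x$ is a cut vertex and $H_1,H_2$ are the two components of $G-x$, there are no edges between $H_1$ and $H_2$, so every vertex $v\in V(H_i)$ has all of its neighbors inside $V(H_i)\cup\{x\}$. By Fact~\ref{c1}, $x$ is adjacent to every vertex, hence $vx\in E(G)$ and therefore
$$d_G(v)=d_{H_i}(v)+1.$$

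First I would import the equalities established just before Fact~\ref{c1}, namely $\widetilde{\alpha}(G)=(n+1)/2$ and $|H_1|=|H_2|=(n-1)/2$ with $n$ odd. Combining the hypothesis $\delta(G)\ge\widetilde{\alpha}(G)-1=(n-1)/2$ with the identity above, I obtain for each $v\in V(H_i)$ that
$$d_{H_i}(v)=d_G(v)-1\ge\delta(G)-1\ge\frac{n-1}{2}-1=\frac{n-3}{2}.$$

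The key observation is that $\tfrac{n-3}{2}=|H_i|-1$, which is the largest possible degree of a vertex inside $H_i$. Thus the lower bound is squeezed against the trivial upper bound $d_{H_i}(v)\le|H_i|-1$, forcing $d_{H_i}(v)=|H_i|-1$ for every $v\in V(H_i)$. Equivalently, each vertex of $H_i$ is adjacent to all the others, so $H_i$ is complete for $i\in\{1,2\}$.

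I do not expect a genuine obstacle here, since the difficult work of pinning $\widetilde{\alpha}(G)$ and the component sizes to their extremal values has already been carried out; what remains is only the degree squeeze. The one point that requires care is the explicit use of Fact~\ref{c1} to secure the ``$+1$'' coming from the edge $vx$: without the guarantee that the cut vertex is universal, a vertex could lose a unit of degree to a missing connection with $x$, and the tight bound $d_{H_i}(v)=|H_i|-1$ would no longer follow.
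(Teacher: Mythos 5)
Your proof is correct, but it takes a genuinely different route from the paper. The paper argues via bipartite holes: if $v_1\not\sim v_2$ inside $H_1$, then $E(v_1,V(H_2)\cup\{v_2\})=\emptyset$ gives a $\left(1,\frac{n+1}{2}\right)$-bipartite-hole; since the two components already supply $(a,b)$-bipartite-holes for every split of $(n+3)/2$ with $a,b\ge 2$, the graph would contain $(a,b)$-bipartite-holes for \emph{all} pairs with $a+b=(n+3)/2$, contradicting $\widetilde{\alpha}(G)=(n+1)/2$. You instead run a degree squeeze: $\delta(G)\ge\widetilde{\alpha}(G)-1=(n-1)/2$ and $d_G(v)=d_{H_i}(v)+1$ force $d_{H_i}(v)\ge (n-3)/2=|H_i|-1$, pinning every vertex of $H_i$ to full degree inside its component. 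Your argument is more elementary (pure degree counting, no hole construction) and in fact stronger than you realize: your closing caveat about Fact~\ref{c1} being essential is not quite right, since if some $v\in V(H_i)$ were non-adjacent to $x$, then all $d_G(v)\ge (n-1)/2=|H_i|$ neighbors of $v$ would lie inside $H_i$, which is impossible as $d_{H_i}(v)\le |H_i|-1$; so the same squeeze proves Fact~\ref{c1} as well, and your approach could replace both of the paper's hole-based proofs. What the paper's approach buys is uniformity: it reuses the exact same bipartite-hole template for Facts~\ref{c1} and \ref{c2}, staying within the invariant the whole paper is organized around.
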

		\begin{proof}[Proof of Fact~\ref{c2}]
			Without loss of generality, we suppose  to the contrary that  $v_1\not\sim v_2$ where $v_1,v_2\in V(H_1)$. 
			Note that $E(v_1,V(H_2)\cup\{v_2\})=\emptyset.$ 
			There exists a $(1,\frac{n+1}{2})$-bipartite-hole in $G$. 
			Since $|H_1|=|H_2|=(n-1)/2$, $G$ contains an $(a,b)$-bipartite-hole for any pair of nonnegative integers $a,b$  with  $a+b=(n+3)/2$, a contradiction.
		\end{proof}
		
		By Facts~\ref{c1} and \ref{c2}, we have $G=K_1\vee 2K_{\frac{n-1}{2}},$ where $n$ is odd. 
        This completes the proof of Theorem~\ref{thmct1}.
    \end{proof}
	\section{\normalsize Concluding remarks} 
	In this paper, we first 
    characterize all $2$-connected non-hamiltonian graphs $G$ satisfying $\sigma_2(G)\ge 2\widetilde{\alpha}(G)-2.$ Then we obtain a stability result of the McDiarmid-Yolov theorem. 
	A graph $G$ with $\delta(G)\ge (n-1)/2$ contains no $(1, \lfloor (n+1)/2\rfloor)$-bipartite-hole, hence 
	$\delta(G) \ge (n-1)/2 \ge \widetilde{\alpha}(G)-1.$ 
	Then the subsequent  known result follows 
	immediately from Theorem~\ref{thmct1}. 
	\begin{cor}
		Let $G$ be a graph of order $n\ge 3$.  
		If $\delta(G)\ge (n-1)/2$, 
		then $G$ is hamiltonian unless $n$ is odd and 
		$G\in\{ G_{ \frac{n-1}{2}}\vee \frac{n+1}{2} K_1, K_1\vee 2K_{ \frac{n-1}{2}} \}$, where $G_{ \frac{n-1}{2}}$ is an arbitrary graph of order $(n-1)/2$. 
	\end{cor}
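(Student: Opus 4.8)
The plan is to deduce this Corollary directly from Theorem~\ref{thmct1}, so essentially all I need to verify is that the hypothesis $\delta(G)\ge(n-1)/2$ already guarantees $\delta(G)\ge\widetilde{\alpha}(G)-1$. First I would translate the degree bound into the language of bipartite holes. Writing $m:=\lfloor(n+1)/2\rfloor$, a vertex $v$ can form the small side of a $(1,m)$-bipartite-hole only if it has at least $m$ non-neighbours in $V(G)\setminus\{v\}$, that is, only if $d_G(v)\le n-1-m=\lceil(n-1)/2\rceil-1$. Since $\delta(G)\ge(n-1)/2$ forces $d_G(v)\ge\lceil(n-1)/2\rceil$ for every $v$ (here one uses that degrees are integers, so the half-integer bound rounds up when $n$ is even), no vertex can be the small side of such a hole, and therefore $G$ contains no $(1,m)$-bipartite-hole.

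From this I would read off the required bound on the bipartite-hole-number: taking $s=1$ and $t=m$ in the definition, the absence of a $(1,m)$-bipartite-hole gives $\widetilde{\alpha}(G)\le s+t-1=\lfloor(n+1)/2\rfloor$, whence $\widetilde{\alpha}(G)-1\le\lfloor(n-1)/2\rfloor\le(n-1)/2\le\delta(G)$. This is precisely the hypothesis of Theorem~\ref{thmct1}, so applying that theorem yields that $G$ is hamiltonian unless $n$ is odd and $G\in\{G_{(n-1)/2}\vee\frac{n+1}{2}K_1,\ K_1\vee 2K_{(n-1)/2}\}$, which is exactly the conclusion to be proved; the two exceptional families and the parity condition are inherited verbatim from Theorem~\ref{thmct1}.

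I do not expect any genuine obstacle here, since the whole argument reduces to the one-line degree-to-hole translation above together with a single invocation of Theorem~\ref{thmct1}. The only point deserving a moment's care is the parity bookkeeping for even $n$: there the bound $\delta(G)\ge(n-1)/2$ is really the integer bound $\delta(G)\ge n/2$, and one should simply note that this is consistent with the theorem, whose exceptional graphs arise only for odd $n$ and are thus automatically excluded when $n$ is even.
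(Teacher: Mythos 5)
Your proposal is correct and follows exactly the paper's own route: the paper likewise observes that $\delta(G)\ge (n-1)/2$ rules out a $(1,\lfloor (n+1)/2\rfloor)$-bipartite-hole, concludes $\delta(G)\ge (n-1)/2\ge \widetilde{\alpha}(G)-1$, and invokes Theorem~\ref{thmct1}. Your write-up merely makes the integrality/parity bookkeeping explicit, which the paper leaves implicit.
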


    By joining a new vertex to a graph $G$, 
	we obtain  the following two results from Theorems~\ref{thmctOre} and \ref{thmct1}, respectively.
    
	\begin{cor} 
		Let $G$ be a connected graph.  
		If $\sigma_2(G)\ge 2\widetilde{\alpha}(G)-4$, 
		then $G$ is traceable 
		unless 
		$G= G_{ \frac{n-2}{2}}\vee \frac{n+2}{2} K_1$, where $n\ge 4$ is even and  $G_{ \frac{n-2}{2}}$ is an arbitrary graph of order $(n-2)/2$.
	\end{cor}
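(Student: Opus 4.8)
The plan is to reduce to Theorem~\ref{thmctOre} by the same universal-vertex device already used in its proof. I set $H:=G\vee K_1$, writing $w$ for the adjoined vertex, so $H$ has order $n+1$ and $w$ is adjacent to every vertex of $G$. Since deleting $w$ turns a Hamilton cycle of $H$ into a Hamilton path of $G$ and vice versa, $G$ is traceable if and only if $H$ is hamiltonian. The first (routine) step is to verify that $H$ carries the correct parameters. As $w$ is universal, no $(s,t)$-bipartite-hole of $H$ with $s,t\ge 1$ can contain $w$, whence $\widetilde{\alpha}(H)=\widetilde{\alpha}(G)$; moreover $d_H(x)=d_G(x)+1$ for $x\in V(G)$ while $w$ belongs to no nonadjacent pair, so $\sigma_2(H)=\sigma_2(G)+2$. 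Hence the hypothesis $\sigma_2(G)\ge 2\widetilde{\alpha}(G)-4$ rewrites as $\sigma_2(H)\ge 2\widetilde{\alpha}(H)-2$, which is exactly the assumption of Theorem~\ref{thmctOre}. Finally $H$ is $2$-connected whenever $G$ is connected of order at least two, because removing $w$ leaves the connected graph $G$ and removing any other vertex leaves a graph still held together through $w$; the cases $n\le 1$ are trivially traceable.

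Applying Theorem~\ref{thmctOre} to $H$, either $H$ is hamiltonian, and then $G$ is traceable, or $H=G_{\frac{N-1}{2}}\vee\frac{N+1}{2}K_1$ with $N:=n+1\ge 5$ odd. In the latter case I would read the structure back to $G$. Oddness of $N$ forces $n$ to be even and $N\ge 5$ forces $n\ge 4$, matching the claimed range. This $H$ consists of a core $C:=G_{\frac{N-1}{2}}$ of order $n/2$ completely joined to an independent set $I$ of size $(N+1)/2=(n+2)/2$. Because $|I|\ge 3$, a vertex of $I$ is nonadjacent to the other vertices of $I$ and so cannot be universal; therefore the universal vertex $w$ must lie in $C$ and dominate $C$. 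Deleting it gives $G=H-w=(C-w)\vee I$ with $C-w$ of order $(n-2)/2$, and since $C=G_{\frac{N-1}{2}}$ ranges over all graphs possessing a dominating vertex, $C-w$ ranges over all graphs of order $(n-2)/2$. Thus $G=G_{\frac{n-2}{2}}\vee\frac{n+2}{2}K_1$ with $G_{\frac{n-2}{2}}$ arbitrary, as required. As a reality check, this family is genuinely non-traceable: its independent set has size $(n+2)/2>(n+1)/2$, exceeding the number of vertices available to separate its members along any Hamilton path.

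The hard part will be the final identification step rather than any estimate. One must argue that the adjoined vertex $w$ is forced into the core $C$ and not into the independent part, and that after its deletion the arbitrariness of $G_{\frac{N-1}{2}}$ descends to full arbitrariness of $G_{\frac{n-2}{2}}$. Both points rest on the single observation that $|I|\ge 3$ once $n\ge 4$, so no vertex of $I$ is universal; the only remaining care is the parameter arithmetic that translates the $N$-indexed orders of the exceptional graph of $H$ into the $n$-indexed orders of the stated exceptional graph of $G$.
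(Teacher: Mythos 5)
Your proof is correct and is essentially the paper's own argument: the paper derives this corollary precisely ``by joining a new vertex'' to $G$ and applying Theorem~\ref{thmctOre} to $G\vee K_1$, using the same identities $\sigma_2(G\vee K_1)=\sigma_2(G)+2$ and $\widetilde{\alpha}(G\vee K_1)=\widetilde{\alpha}(G)$ that you verify. Your reading-back of the exceptional graph (the universal vertex must lie in the join core since the independent part has size at least $3$) and the parameter translation $N=n+1$ are exactly the intended, and correct, completion of that sketch.
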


	\begin{cor} 
		Let $G$ be a graph.  
		If $\delta(G)\ge \widetilde{\alpha}(G)-2$, 
		then $G$ is traceable 
		unless 
		$G\in\{ G_{ \frac{n-2}{2}}\vee \frac{n+2}{2} K_1, 
		2K_{ \frac{n}{2}} \}$, where $n$ is even and  $G_{ \frac{n-2}{2}}$ is an arbitrary graph of order $(n-2)/2$.
	\end{cor}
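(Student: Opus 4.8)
The plan is to deduce this corollary from Theorem~\ref{thmct1} by the vertex-joining trick already used in the proof of Theorem~\ref{thmctOre}. Given a graph $G$ of order $n$ with $\delta(G)\ge\widetilde{\alpha}(G)-2$, I would set $G':=G\vee K_1$, adjoining a single new vertex $w$ adjacent to every vertex of $G$, so that $n(G')=n+1$. Any Hamilton cycle of $G'$ must pass through $w$, and deleting $w$ from such a cycle leaves a Hamilton path of $G$; hence $G$ is traceable as soon as $G'$ is hamiltonian, and it suffices to analyse the hamiltonicity of $G'$.

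The parameter bookkeeping is the routine but essential first step. Since $w$ is adjacent to all of $V(G)$, we have $\delta(G')=\delta(G)+1$. Moreover $w$ cannot belong to either part of an $(s,t)$-bipartite-hole with $s,t\ge1$, as it sends an edge to every other vertex; consequently $G'$ and $G$ have exactly the same bipartite holes, and so $\widetilde{\alpha}(G')=\widetilde{\alpha}(G)$, precisely as recorded in the proof of Theorem~\ref{thmctOre}. Combining these two facts gives $\delta(G')=\delta(G)+1\ge\widetilde{\alpha}(G)-1=\widetilde{\alpha}(G')-1$, so $G'$ satisfies the hypothesis of Theorem~\ref{thmct1}.

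Applying Theorem~\ref{thmct1} to $G'$, either $G'$ is hamiltonian, whence $G$ is traceable as above, or $G'$ is one of the two exceptional graphs of odd order $n(G')=n+1$, which forces $n$ to be even. It then remains to translate each exceptional form for $G'$ back to a form for $G=G'-w$, using that $w$ is a universal vertex of $G'$. If $G'=G_{n/2}\vee\frac{n+2}{2}K_1$, the universal vertices of $G'$ are exactly the vertices of the core $G_{n/2}$ that are adjacent to all of $G_{n/2}$; since $w$ is universal it is such a vertex, and deleting it yields $G=(G_{n/2}-w)\vee\frac{n+2}{2}K_1=G_{(n-2)/2}\vee\frac{n+2}{2}K_1$ with arbitrary core of order $(n-2)/2$. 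If instead $G'=K_1\vee 2K_{n/2}$, then the apex vertex is the unique universal vertex, so $w$ is this apex and $G=G'-w=2K_{n/2}$. These are exactly the two families in the statement, and both are readily seen to be non-traceable.

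The only point requiring real care --- and the expected main obstacle --- is the consistency of the vertex identification in the first exceptional form, since a priori $G_{n/2}\vee\frac{n+2}{2}K_1$ need not contain any universal vertex at all. I would resolve this by noting that $w$ is universal in $G'$ by construction, so under any isomorphism realising $G'$ in this form the image of $w$ must be a universal vertex; this certifies that the core does possess a universal vertex and that $G=G_{(n-2)/2}\vee\frac{n+2}{2}K_1$ with the smaller core arbitrary. Conversely, for any graph $H$ of order $(n-2)/2$ one checks the identity $\big(H\vee\frac{n+2}{2}K_1\big)\vee K_1=(H\vee K_1)\vee\frac{n+2}{2}K_1$, which confirms that every such $G$ genuinely arises as an exception. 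The small-order boundary cases should be verified directly but pose no difficulty.
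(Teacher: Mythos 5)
Your proposal is correct and is exactly the paper's intended argument: the paper derives this corollary from Theorem~\ref{thmct1} by the same vertex-joining trick (noting $\delta(G\vee K_1)=\delta(G)+1$ and $\widetilde{\alpha}(G\vee K_1)=\widetilde{\alpha}(G)$), and your translation of the two exceptional graphs of $G\vee K_1$ back to $G_{\frac{n-2}{2}}\vee \frac{n+2}{2}K_1$ and $2K_{\frac{n}{2}}$ via the universality of the added vertex is the right way to fill in the details the paper leaves implicit.
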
 

    \section*{\normalsize Acknowledgement}  The authors are grateful to Professor Xingzhi Zhan for his constant support and guidance. This research  was supported by the NSFC grant 12271170.

\section*{\normalsize Declaration}

\noindent\textbf{Conflict~of~interest}
The authors declare that they have no known competing financial interests or personal relationships that could have appeared to influence the work reported in this paper.

\noindent\textbf{Data~availability}
No data was used for the research described in the article.

\end{document}